\documentclass[a4paper, reqno]{amsart}
\newif\iffinal
\finaltrue

\pdfoutput=1 

\usepackage[utf8]{inputenc}
\usepackage[T1]{fontenc}
\usepackage{lmodern}
\usepackage[english]{babel}
\usepackage[shortlabels]{enumitem}
\usepackage{amssymb}
\usepackage{colonequals}
\usepackage{mathtools}
\usepackage{bm}
\usepackage[pdfencoding=auto, colorlinks=true, linkcolor=blue, citecolor=blue]{hyperref}
\usepackage{colortbl}
\usepackage{color}
\usepackage{letltxmacro}
\usepackage{todonotes}
\LetLtxMacro\todonotestodo\todo
\renewcommand{\todo}[2][]{\todonotestodo[backgroundcolor=yellow, #1]{TODO: {#2}}}
\newcommand{\attention}[2][]{\todonotestodo[backgroundcolor=red, #1]{TODO: {#2}}}
\iffinal\renewcommand{\todo}[2][]{}\renewcommand{\attention}[2][]{}\fi
\iffinal
\else
\usepackage[mark]{gitinfo2}
\renewcommand{\gitMark}{\jobname\,\textbullet{}\,\gitFirstTagDescribe\,\textbullet{}\,\gitAuthorName,\,\gitAuthorIsoDate}
\fi
\DeclareMathOperator{\Int}{Int}

\DeclareMathOperator{\fdk}{fd-ker}

\DeclareMathOperator{\fixdiv}{d}
\DeclareMathOperator{\rank}{rank}
\def\card#1{\left|#1\right|}
\DeclarePairedDelimiter{\norm}{\lVert}{\rVert}

\newcommand{\Qspan}{\operatorname{span}_{\Q}}
\newcommand{\N}{\mathbb{N}}
\newcommand{\Q}{\mathbb{Q}}

\newcommand{\Z}{\mathbb{Z}}
\newcommand{\val}{\mathsf{v}}

\newcommand{\argdot}{\,\cdot\,}

\newcommand{\Qx}{\mathbb{Q}[x]}

\newcommand{\abs}[1]{\lvert#1\rvert}

\newcommand{\tuple}[1]{\boldsymbol{#1}}
\newcommand{\prodtuple}[2]{#1^{\tuple{#2}}}
\newcommand{\prodnotuple}[2]{#1^{#2}}
\newcommand{\ip}[2]{\langle #1, #2\rangle}
\newcommand{\polyset}{\mathcal{P}}
\newcommand{\witnesses}{\mathcal{W}}
\newcommand{\longmid}{\,\,\middle\vert\,\,}

\newcommand{\Zinfnorm}[1]{\big|\!\big\lceil #1 \big\rceil\!\big|_\infty}
\newcommand{\Zinfnormb}[1]{\bigg|\!\bigg\lceil #1 \bigg\rceil\!\bigg|_\infty}

\makeatletter
\newtheorem*{rep@theorem}{\rep@title}
\newcommand{\newreptheorem}[2]{%
\newenvironment{rep#1}[1]{%
 \def\rep@title{#2 \ref{##1}}%
 \begin{rep@theorem}}%
 {\end{rep@theorem}}}
\makeatother

\makeatletter
\@namedef{subjclassname@2020}{\textup{2020} Mathematics Subject Classification}
\makeatother

\newreptheorem{theorem}{Theorem}

\newtheorem{theorem}{Theorem}
\newtheorem{proposition}{Proposition}[section]

\newtheorem{lemma}[proposition]{Lemma}
\theoremstyle{definition}
\newtheorem{definition}[proposition]{Definition}
\newtheorem{example}[proposition]{Example}

\newtheorem{notation}[proposition]{Notation}
\newtheorem{convention}[proposition]{Convention}
\newtheorem{fact}[proposition]{Fact}
\theoremstyle{remark}
\newtheorem{remark}[proposition]{Remark}
\newtheorem*{rem*}{Remark}
\numberwithin{equation}{section}

\author{Moritz Hiebler}
\address{Department of Mathematics\\University of Klagenfurt\\
  Universitätsstraße 65-67\\9020 Klagenfurt am Wörthersee\\Austria}
\email{\href{mailto:moritz.hiebler@gmx.at}{moritz.hiebler@gmx.at}}

\author{Sarah Nakato}
\address{Department of Mathematics\\
  Kabale University\\
  Plot 364 Block 3 Kikungiri Hill\\Kabale\\Uganda}
\email{\href{mailto:snakato@tugraz.at}{snakato@kab.ac.ug}}
\thanks{S.~Nakato is supported by the Austrian Science Fund (FWF):
P~30934}

\author{Roswitha Rissner}
\address{Department of Mathematics\\University of Klagenfurt\\
  Universitätsstraße 65-67\\9020 Klagenfurt am Wörthersee\\Austria}
\email{\href{mailto:roswitha.rissner@aau.at}{roswitha.rissner@aau.at}}
\thanks{R.~Rissner is supported by the Austrian Science Fund (FWF):
 DOC~78}

\title[%
  \textit{C\MakeLowercase{haracterizing absolutely irreducible int.-valued polynomials
  over discrete valuation domains}}
]{%
  Characterizing absolutely irreducible integer-valued polynomials
  over discrete valuation domains}
\keywords{Non-unique factorization, irreducible elements, absolutely irreducible elements,
  integer-valued polynomials} \subjclass[2020]{13A05, 11S05, 11R09, 13B25, 13F20, 11C08}

\begin{document}

\begin{abstract}
  Rings of integer-valued polynomials are known to be atomic,
non-factorial rings furnishing examples for both irreducible elements
for which all powers factor uniquely (\emph{absolutely irreducibles})
and irreducible elements where some power has a factorization
different from the trivial one.

In this paper, we study irreducible polynomials $F \in \Int(R)$ where
$R$ is a discrete valuation domain with finite residue field and
show that it is possible to explicitly determine a number $S\in \N$
that reduces the absolute irreducibility of $F$ to the unique
factorization of $F^S$.

To this end, we establish a connection between the factors of powers of
$F$ and the kernel of a certain linear map that we associate to $F$.
This connection yields a characterization of absolute
irreducibility in terms of this so-called \emph{fixed divisor
  kernel}. Given a non-trivial element $\tuple{v}$ of this kernel, we
explicitly construct non-trivial factorizations of $F^k$, provided that
$k\ge L$, where $L$ depends on $F$ as well as the choice of
$\tuple{v}$. We further show that this bound cannot be improved in
general. Additionally, we provide other (larger) lower bounds for $k$, one of
which only depends on the valuation of the denominator of $F$ and the
size of the residue class field of $R$.


\end{abstract}

\maketitle

\section{Introduction}
In atomic domains which allow non-unique factorizations, powers of
irreducible elements $c$ may or may not have a factorization other
than the trivial one, that is,~$c\cdots c$. Those irreducible elements all of
whose powers (essentially) factor uniquely are the so-called
\emph{absolutely irreducible} elements, a notion that ``bridges'' the
gap between prime and irreducible elements.  A thorough understanding
of the factorization behavior of such a ring necessarily requires 
comprehension of its irreducible elements and the factorization behavior of
their powers.

Rings of integer-valued polynomials,
\begin{equation*}
  \Int(D) = \{F \in K[x] \mid F(D) \subseteq D\}
\end{equation*}
where $D$ is a domain with quotient field $K$, are known to provide
examples for both absolutely irreducible elements and irreducible
elements that are not absolutely irreducible, see Angermüller's
recent publication~\cite{Angermuller:2022:strongatoms} and, for
example, Nakato's article~\cite{Nakato:2020:Non-Abs} for explicit
constructions in $\Int(\Z)$.  Note that in the literature, absolutely
irreducible elements have also been called completely
irreducible~\cite{Kaczorowski:1981:compl-irred} and strong
atoms~\cite{Baginski-Kravitz:2010:HFKR,
  Chapman-Krause:2012:Atomic-decay}.

In this paper, we study factorizations of powers of (non-constant)
irreducible elements $F\in \Int(R)$ where $(R,pR)$ is a discrete
valuation domain with finite residue field. In order to prove that $F$ is
not absolutely irreducible, it suffices to allege a non-trivial
factorization of some power $F^k$. Proving absolute irreducibilty, on
the other hand, requires to get a handle on the (unique)
factorizations of all exponents $k$ for which the powers $F^k$
potentially factor non-uniquely.  We establish lower bounds
$S$ such that whenever $F$ is not absolutely irreducible, then $F^k$
factors non-uniquely for $k\ge S$. In other words, to determine
whether $F$ is absolutely irreducible or not, it suffices to check the
factorization behavior of $F^S$, that is, we only need to treat one
power of $F$ instead of infinitely many. Our approach yields a full
characterization of all absolutely irreducible polynomials in
$\Int(R)$ in terms of the kernel of a certain linear map, which we
call the \emph{fixed divisor kernel}.

The focus of factorization theoretic studies has layed on Krull monoids
to a large extent so far. For integral domains, Krull domains are
exactly those domains whose multiplicative monoid is Krull.

For a discrete valuation domain $R$ with finite residue field,
the ring $\Int(R)$ is not Krull but Prüfer,
cf.~\cite{Cahen-Chabert-Frisch:2000:interpolation,
  Loper:1998:intdpruefer}.  Reinhart~\cite{Reinhart:2014:monadic},
however, showed that they are monadically Krull, that is, he proved
that for each $F \in \Int(D)$ (for factorial domains~$D$), the monadic
submonoid generated by $F$,
\begin{equation*}
  [\![F]\!] = \{G \in \Int(D) \mid G \text{ divides } F^n
  \text{ for some } n\in \N \},
\end{equation*}
is Krull and Frisch~\cite{Frisch:2014:monadic} extended this result to
Krull domains. Note that an irreducible polynomial $F \in \Int(D)$
is absolutely irreducible if and only if $[\![F]\!]$ is factorial.

Factorization-theoretic properties of rings of integer-valued
polynomials have been studied over the last decades. The papers of
Anderson, Cahen, Chapman and
Smith~\cite{AndersonS-Cahen-Chapman-Smith:1995:fac-iv}, Cahen and
Chabert~\cite{Cahen-Chabert:1995:Elasticity-for-IVP} and Chapman and
McClain~\cite{Chapman-McClain:2005:irred-iv-poly} can be considered as
the starting point of this line of research and have encouraged
further research activity in the area,
e.g.,~\cite{Antoniou-Nakato-Rissner:2018:table-crit,
  Fadinger-Frisch-Windisch:2022:setsIntV, Frisch:2013:prescribed-SL,
  Frisch-Nakato-Rissner:2019:Sets-of-lengths,
  Peruginelli:2015:square-free-denom}.

Recently, there has been perceptible progress in the study of
absolutely irreducible elements in rings of integer-valued
polynomials. In addition to the already mentioned references, Rissner
and Windisch~\cite{Rissner-Windisch:2021:binom} have confirmed the
decade-long open conjecture that the binomial polynomials
$\binom{x}{n}$ are absolutely irreducible for all $n\in \N$. The
special case where $n$ is a prime number has been verified before by
McClain~\cite{McClain:2004:honorsthesis} and also follows from Frisch
and Nakato's graph-theoretic
criterion~\cite{Frisch-Nakato:2019:Graphtheoretic}. The latter
provides a characterization of absolutely irreducible integer-valued
polynomials over principal ideal domains whose denominators are
square-free. One of the consequences of this criterion is that for
such polynomials $F$ one can determine absolute irreducibility by
checking the unique factorizations of $F^3$. For integer-valued
polynomials whose denominators contain square factors, this criterion
is known to be a sufficient, but not a necessary condition for
absolute irreducibility. Adding to an overall understanding of
absolutely irreducible integer-valued polynomials is the
characterization of the class of completely split absolutely
irreducible elements of $\Int(R)$ where $R$ is a discrete valuation
domain $R$ with finite residue field by Frisch, Nakato, and
Rissner~\cite{Frisch-Nakato-Rissner:2022:split}.

The last two references form the motivational starting point for the
paper at hand. We fully characterize the absolutely irreducible
elements in $\Int(R)$ where $R$ is a discrete valuation domain with
finite residue field and we determine different (explicit) exponents
$k$ such that $F^k$ factors uniquely if and only if $F$ is absolutely
irreducible. This is not only interesting from a theoretical point of
view but also provide means to approach the subject from a
computational perspective.

Note that while the bound resulting from the graph-theoretic
criterion~\cite{Frisch-Nakato:2019:Graphtheoretic} cannot be improved
for integer-valued polynomials over principal ideal domains, it is not
tight in the case when the underlying ring is a discrete valuation
domain $R$ with finite residue field. Indeed, in the case of a single
prime element $p$ in the denominator, it even suffices to check the
factorizations of $F^2$ to determine absolute irreducibility. That is,
if $F = \frac{f}{p} \in \Int(R)$ is irreducible but not absolutely
irreducible, then, by \cite[Theorem~1
and~3]{Frisch-Nakato:2019:Graphtheoretic}, there exists a non-constant
irreducible divisor $h$ of $f$ in $R[x]$ such that for all roots $a$
of $h$ modulo $p$, we have $\val(f(a)) >1$.  Therefore, if $f=gh$ with
$g\in R[x]$, we obtain that
\begin{equation*}
  F^2 = \frac{g^2h}{p^2} \cdot h
\end{equation*}
is a factorization (not necessarily into irreducibles) of $F^2$
different from $F \cdot F$.


\section{Results}
\label{sec:results}
As \(\Int(R)\) is trivial for discrete valuation domains \(R\) with
infinite residue field, we restrict our attention to those with finite
residue field, cf.~\cite[Corollary~I.3.7]{Cahen-Chabert:1997:book}.

Given a (non-constant) irreducible $F \in \Int(R)$, we determine an
exponent \(S \in \N\) such that $F$ is absolutely irreducible if and
only if $F^S$ factors uniquely. We show that integer-valued factors of
powers of $F$ are encoded as non-trivial elements of what we call the
\emph{fixed divisor kernel}. This allows us not only to determine
lower bounds for $S$ (non-unique factorizations trivially transfer to
higher powers), but also yields a neat characterization of absolute
irreducibility in terms of this special kernel.

As usual in factorization theory, we are only interested in
essentially different factorizations, which is why we will not
distinguish between associated elements
(cf.~Remark~\ref{remark:polyset-egal}). Hence, it suffices to consider
polynomials $F\in \Int(R)$ which are of the form $F = \frac{f}{p^n}$
with \emph{fixed divisor} $\fixdiv(f) = p^n$
(Definition~\ref{definition:fixed-divisor}) where
$f = \prod_{g\in\polyset}g^{m_g} \in R[x]$ with \emph{irreducible
  divisor set} $\polyset$
(Definition~\ref{definition:irreducible-divisor-set}) and the vector
of the corresponding multiplicities
$\tuple{m} = (m_g)_{g\in \polyset}\in \N^{\polyset}$.

An integer-valued factor of $F^j$ always has to be of the form
$H=\frac{\prod_{g\in \polyset}g^{k_g}}{p^r}$ where $r \le jn$
(Fact~\ref{fact:iv-fac}). Roughly speaking, whether or not $F^j$ has a
non-trivial factorization is asking whether it is possible to
``suitably re-distribute'' the $jm_g$ respective copies of the
polynomials $g$. This, in turn, heavily depends on the values of
$(\val(g(a)))_{g\in\polyset}$ for all $a\in R$.

However, the relevant information about all these valuation vectors
can be encoded in the kernel of a certain linear map, the \emph{fixed
  divisor kernel \(\fdk(f)\)} of $f$
(Definition~\ref{definition:fd-kernel}).

Indeed, in Section~\ref{sec:fdkernel}, we show how to use a
non-trivial element $\tuple{v}\in \fdk(f)$ to explicitly engineer a
non-trivial factorization of $F^k$ for $k \ge L$ where $L$ is a bound
depending on $n$, $\tuple{v}$, and the vector of multiplicities
$\tuple{m}$, leading to our first main result. Regarding notation,
$\tuple{v}^+$ and $\tuple{v}^-$ are the positive and negative part of
$\tuple{v}$, respectively, and
$\Zinfnorm{\tuple{x}} = \lceil \max_g|x_g| \rceil$.

\begin{theorem}\label{theorem:nuf-bound}
  Let $(R,pR)$ be a discrete valuation domain with valuation $\val$ and
finite residue field.  Further, let
$f = \prod_{g\in \polyset}g^{m_g}\in R[x]$ be a primitive,
non-constant polynomial with irreducible divisor set $\polyset$ and
\(\tuple{m} = (m_g)_{g\in \polyset}\in\N^\polyset\) the vector of the
corresponding multiplicities and assume that
$\val(\fixdiv(f)) = n \in \N$.

If $\fdk(f) \neq \boldsymbol 0$, then $F = \frac{f}{p^n}$ is not absolutely
irreducible. In fact, if $F$ is irreducible and
$\tuple 0 \neq \tuple v \in \fdk(f) \cap \Z^{\polyset}$, then $F^{j}$ factors
non-uniquely for all $j\in \N$ with
$j \ge (n+1) \left(\Zinfnorm{\frac{\tuple{v}^+}{\tuple{m}}}
  +
 \Zinfnorm{\frac{\tuple{v}^-}{\tuple{m}}}\right)$.

\end{theorem}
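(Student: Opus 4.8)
The plan is to prove the quantitative statement directly, by exhibiting for every $j$ meeting the bound an explicit factorization $F^{j}=H\cdot H'$ into two integer-valued polynomials in which $H$ is \emph{not} associated to any power of $F$. Granting this, I would refine $H$ and $H'$ into irreducibles of the atomic ring $\Int(R)$; since $F$ is irreducible, $F\cdots F$ ($j$ factors) is already a factorization of $F^{j}$ into irreducibles, whereas the refined one must contain an irreducible that is not an associate of $F$ (otherwise $H$ itself would be an associate of a power of $F$), so the two factorizations are essentially different and $F^{j}$ factors non-uniquely. The unquantified first assertion then follows at once: if $F$ is not irreducible it is not absolutely irreducible by definition, and if it is irreducible we apply the quantitative part to any nonzero integer vector in $\fdk(f)$.

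For the construction, write $\tuple{w}(a)\colonequals(\val(g(a)))_{g\in\polyset}$ for $a\in R$, so that $\val(f(a))=\langle\tuple{m},\tuple{w}(a)\rangle$ and a product $\prod_{g}g^{k_{g}}/p^{r}$ with non-negative exponents lies in $\Int(R)$ precisely when $\langle\tuple{k},\tuple{w}(a)\rangle\ge r$ for all $a\in R$. Given $\tuple 0\neq\tuple v\in\fdk(f)\cap\Z^{\polyset}$, I would set $i\colonequals(n+1)\,\Zinfnorm{\frac{\tuple{v}^{-}}{\tuple{m}}}$; then for $j\ge(n+1)\bigl(\Zinfnorm{\frac{\tuple{v}^{+}}{\tuple{m}}}+\Zinfnorm{\frac{\tuple{v}^{-}}{\tuple{m}}}\bigr)$ one has $j-i\ge(n+1)\,\Zinfnorm{\frac{\tuple{v}^{+}}{\tuple{m}}}\ge 0$, and I would take
\[
  H\colonequals\frac{\prod_{g\in\polyset}g^{\,im_{g}+v_{g}}}{p^{in}},
  \qquad
  H'\colonequals\frac{\prod_{g\in\polyset}g^{\,(j-i)m_{g}-v_{g}}}{p^{(j-i)n}},
\]
so that $F^{j}=HH'$ holds in $K[x]$ automatically. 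It then remains to check that (i) the exponents $im_{g}+v_{g}$ and $(j-i)m_{g}-v_{g}$ are non-negative; (ii) $H,H'\in\Int(R)$; and (iii) $H$ is not an associate of a power of $F$.

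Point (i) is immediate from $i\ge\max_{g}v_{g}^{-}/m_{g}$ and $j-i\ge\max_{g}v_{g}^{+}/m_{g}$. For (ii) I would argue for $H$ (the argument for $H'$ being symmetric under interchanging $\tuple{v}^{+}$ and $\tuple{v}^{-}$): one must show $\langle i\tuple{m}+\tuple{v},\tuple{w}(a)\rangle\ge in$ for every $a\in R$. If $\val(f(a))=n$, the defining property of the fixed divisor kernel (Definition~\ref{definition:fd-kernel}) gives $\langle\tuple{v},\tuple{w}(a)\rangle=0$, so equality holds. Otherwise $\ell\colonequals\langle\tuple{m},\tuple{w}(a)\rangle\ge n+1$, and since every summand $m_{g}\val(g(a))$ is non-negative we get $\langle\tuple{v},\tuple{w}(a)\rangle\ge-\bigl(\max_{g}v_{g}^{-}/m_{g}\bigr)\ell$, which reduces the desired inequality to $i(\ell-n)\ge(\max_{g}v_{g}^{-}/m_{g})\,\ell$; because $\ell/(\ell-n)\le n+1$ for every $\ell\ge n+1$, the factor $n+1$ in the definition of $i$ is exactly what makes this go through. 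For (iii), the numerator of $H$ is primitive (a product of irreducible divisors of the primitive polynomial $f$), and the equalities just obtained at the points with $\val(f(a))=n$ show that its fixed divisor is precisely $p^{in}$; likewise $F^{a}=f^{a}/p^{an}$ has numerator with fixed divisor $p^{an}$. Hence if $H$ were associated to $F^{a}$ --- equivalently, equal to $F^{a}$ times a unit of $R$ --- comparing the exact powers of $p$ forces $a=i$ (using $n\ge 1$), and then unique factorization in $R[x]$ forces $\tuple{v}=\tuple 0$, a contradiction.

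I expect the main obstacle to be part (ii) at the points $a$ with $\val(f(a))>n$: infinitely many integer-valued inequalities must be controlled simultaneously, and the crucial insight is that the only ``tight'' points are the finitely many valuation vectors arising from the points with $\val(f(a))=n$ --- where membership of $\tuple{v}$ in $\fdk(f)$ turns the inequality into an equality --- while everywhere else the slack $\langle\tuple{m},\tuple{w}(a)\rangle-n\ge 1$ in the fixed-divisor condition, amplified by the factor $n+1$, absorbs the possible misalignment of $\tuple{v}$. Calibrating this factor so that the resulting threshold is exactly $(n+1)\bigl(\Zinfnorm{\frac{\tuple{v}^{+}}{\tuple{m}}}+\Zinfnorm{\frac{\tuple{v}^{-}}{\tuple{m}}}\bigr)$, a bound the paper later shows cannot be improved in general, is the delicate point; a minor separate technicality is to dispose of the points of $R$ at which some $g\in\polyset$ vanishes, which is harmless since either $i=0$ (so $H\in R[x]$) or every exponent $im_{g}+v_{g}$ is strictly positive (and symmetrically for $H'$).
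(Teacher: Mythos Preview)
Your proof is correct and follows essentially the same approach as the paper. The paper isolates the key step---showing that $\frac{\prod_{g}g^{km_g-v_g}}{p^{kn}}\in\Int(R)$ and is not a power of $F$ once $k\ge(n+1)\Zinfnorm{\tuple{v}^+/\tuple{m}}$---as a separate proposition (Proposition~\ref{proposition:in-IntR}) and then applies it to $\tuple{v}$ and $-\tuple{v}$, whereas you carry out the same computation inline for your $H$ and $H'$; the slack argument at non-witnesses (bounding $\langle\tuple{v},\tuple{w}(a)\rangle$ via $\max_g v_g^{-}/m_g$ and using $\ell/(\ell-n)\le n+1$) is identical in substance to the paper's.
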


We also show that the reverse implication holds, that is, whenever the
fixed divisor kernel of $f$ is trivial, then $F$ is absolutely
irreducible (where we need to impose a condition on $f$ which is
trivially satisfied whenever $F$ is irreducible). This yields our next
main result, a characterization of absolutely irreducible polynomials
in $\Int(R)$ in terms of the fixed divisor kernel.

\begin{theorem}\label{theorem:abs-irred-and-kernel}
  Let $(R,pR)$ be a discrete valuation domain with valuation $\val$ and
finite residue field. Further, let $f \in R[x]$ be a primitive,
non-constant polynomial with $\val(\fixdiv(f)) = n \in \N$ and assume
that $f$ is not a proper power of another polynomial in $R[x]$.

Then $\frac{f}{p^n}$ is absolutely irreducible if and only if
$\fdk(f) = \boldsymbol 0$.

\end{theorem}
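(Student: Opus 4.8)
\textit{Sketch of proof.} The forward implication is immediate from Theorem~\ref{theorem:nuf-bound}: an absolutely irreducible element is in particular irreducible, so if $\fdk(f)\neq\tuple 0$ then (after clearing denominators) we may pick $\tuple 0\neq\tuple v\in\fdk(f)\cap\Z^{\polyset}$, and the ``in fact'' part of that theorem yields a power $F^{j}$ with a factorization different from $F\cdots F$ --- contradicting absolute irreducibility. For the converse I would assume $\fdk(f)=\tuple 0$ (note that ``$f$ is not a proper power of a polynomial in $R[x]$'' says precisely $\gcd\{m_g : g\in\polyset\}=1$, by uniqueness of factorization in $R[x]$) and reduce everything to one claim: \emph{for every $j\in\N$ and every factorization $F^{j}=H_1\cdots H_\ell$ into non-units $H_i\in\Int(R)$, each $H_i$ is associate to $F^{c_i}$ for some integer $c_i\ge 1$, and $\sum_i c_i=j$.} Granting this, $F$ is irreducible (a factorization into two non-units would force $c_1+c_2=1$ with $c_1,c_2\ge 1$), and if $F^{j}=H_1\cdots H_\ell$ with all $H_i$ irreducible then $H_i\sim F^{c_i}$ with $c_i\ge 1$ forces $c_i=1$ (a proper power of the non-unit $F$ is reducible), so $H_i\sim F$ and $\ell=j$; hence every power $F^{j}$ factors uniquely and $F$ is absolutely irreducible.

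To prove the claim, write $\tuple{w}_a:=(\val(g(a)))_{g\in\polyset}$ for the valuation vector at a point $a$, so that $\val(\fixdiv(\prod_{g}g^{k_g}))=\min_a\ip{\tuple{k}}{\tuple{w}_a}$. By Fact~\ref{fact:iv-fac} each $H_i$ is, up to associates, of the form $\frac{\prod_{g}g^{k^{(i)}_g}}{p^{r_i}}$ with non-negative integers $k^{(i)}_g$; comparing the exponents of $p$ and of each $g\in\polyset$ in $F^{j}=\prod_i H_i$ gives $\sum_i\tuple{k}^{(i)}=j\tuple{m}$ and $\sum_i r_i=jn$, while $H_i\in\Int(R)$ forces $r_i\le M_i:=\min_a\ip{\tuple{k}^{(i)}}{\tuple{w}_a}$. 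Superadditivity of $\min$ then squeezes the chain
\begin{equation*}
  jn=\sum_i r_i\le\sum_i M_i\le\min_a\sum_i\ip{\tuple{k}^{(i)}}{\tuple{w}_a}=\min_a\ip{j\tuple{m}}{\tuple{w}_a}=jn,
\end{equation*}
so $r_i=M_i$ for all $i$ and $\sum_i M_i=jn$. Now fix any point $a$ with $\ip{\tuple{m}}{\tuple{w}_a}=n$ (one exists because $\val(\fixdiv f)=n$): from $\sum_i\ip{\tuple{k}^{(i)}}{\tuple{w}_a}=jn=\sum_i M_i$ together with $\ip{\tuple{k}^{(i)}}{\tuple{w}_a}\ge M_i$ we obtain $\ip{\tuple{k}^{(i)}}{\tuple{w}_a}=M_i=r_i$ for every $i$. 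Consequently $\tuple{v}^{(i)}:=n\,\tuple{k}^{(i)}-r_i\,\tuple{m}$ is orthogonal to $\tuple{w}_a$ at every point realizing $n=\val(\fixdiv f)$, i.e.\ $\tuple{v}^{(i)}\in\fdk(f)=\tuple 0$, so $n\,\tuple{k}^{(i)}=r_i\,\tuple{m}$. Since the entries of $\tuple{k}^{(i)}$ are non-negative integers and $\gcd\{m_g\}=1$, the rational $c_i:=r_i/n$ must be a non-negative integer; $c_i=0$ makes $H_i$ a unit, hence $c_i\ge 1$, $H_i\sim F^{c_i}$, and $\sum_i c_i=\tfrac1n\sum_i r_i=j$.

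The one delicate point --- and the reason this argument belongs after the preliminary sections --- is the identification of $\tuple{v}^{(i)}$ with an element of $\fdk(f)$: this rests on the computation of the valuation of a fixed divisor through the vectors $\tuple{w}_a$ and, crucially, on the defining property of the fixed divisor kernel (Definition~\ref{definition:fd-kernel}) as the space of those $\tuple{v}$ that are orthogonal to all valuation vectors $\tuple{w}_a$ at which $\fixdiv(f)$ attains its valuation. Everything else is routine: the superadditivity estimate, the exponent comparison (unique factorization in $R[x]$), and the elementary observation that $\gcd\{m_g\}=1$ promotes $n\,\tuple{k}^{(i)}=r_i\,\tuple{m}$ to $r_i/n\in\Z$.
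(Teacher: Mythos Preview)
Your argument is correct and follows essentially the same route as the paper. The paper's proof invokes Lemma~\ref{lemma:factors-and-kernelA} to conclude that any factor $\frac{\prodtuple{\polyset}{k}}{p^\ell}$ of $F^j$ satisfies $\tuple{k}-\tfrac{\ell}{n}\tuple{m}\in\fdk(f)$, and then uses $\gcd(\tuple{m})=1$ exactly as you do; your superadditivity-of-$\min$ squeeze is precisely an inline re-derivation of that lemma (the equality $\ip{\tuple{k}^{(i)}}{\tuple{w}_a}=r_i$ at every witness $a$ is the content of its proof). The only cosmetic difference is that the paper treats one factor at a time and appeals to Remark~\ref{remark:equiv-abs-irred}, whereas you handle the whole factorization at once.
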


In the remaining paper, we have a closer look at the bound given in
Theorem~\ref{theorem:nuf-bound}.  In Section~\ref{sec:bounds}, we are
able to give an upper bound for
\begin{equation*}
  \min\!\left\{\norm{\tuple{v}}_\infty \longmid \tuple 0 \neq \tuple{v}\in\fdk(f)\cap \Z^{\polyset}\right\}
\end{equation*}
using a tailored version of Siegel's lemma
(Lemma~\ref{lemma:siegel-version}). For this purpose, we introduce the
notion of a \emph{reduced fdp matrix} $A \in \Q^{W\times \polyset}$
which is a full row-rank matrix satisfying $\ker(A) = \fdk(f)$, where
$W$ is a certain finite set of so-called \emph{fixed divisor
  witnesses} of $f$ (Definitions~\ref{definition:fd-kernel}
and~\ref{definition:reduced-fdp}).  This further allows us to
determine other exponents $S$, one of which depends only on $n$ and the
size of the finite residue field of $R$ such that an irreducible
polynomial $F = \frac{f}{p^n}\in \Int(R)$ is absolutely irreducible if
and only if $F^S$ factors uniquely.

\begin{theorem}\label{theorem:upperbounds}
  Let $(R,pR)$ be a discrete valuation domain with valuation $\val$ and
let $q = \card{R/pR}$ be the cardinality of the finite residue field
of $R$.  Further, let $f\in R[x]$ be a
non-constant, primitive polynomial with irreducible divisor set
$\polyset$ and \((m_g)_{g\in \polyset}\in\N^\polyset\) the
vector of the corresponding multiplicities, that is,
$f = \prod_{g\in \polyset}g^{m_g}$. Let $\val(\fixdiv(f)) = n \in \N$
and $A \in \Q^{W\times \polyset}$ (with $W \subseteq\witnesses(f)$) be
a reduced fdp matrix of $f$ containing $u$ rows with only one non-zero
entry.

Assume that $F$ is irreducible. Then the following assertions are equivalent:
\begin{enumerate}
\item\label{bound:1} $F^j$ factors uniquely for all $j\in \N$, that
  is, $F$ is absolutely irreducible.
\item\label{bound:2} $F^S$ factors uniquely for
  $S = 2(n+1)n^{q^{\left\lceil \frac{n}{2} \right\rceil}}$.
\item\label{bound:3} $F^S$ factors uniquely for
  $S = 2(n+1) n^{\rank(A)-u}$.
\end{enumerate}

\end{theorem}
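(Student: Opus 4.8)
The plan is to deduce the theorem from two earlier results: the characterization of absolute irreducibility in Theorem~\ref{theorem:abs-irred-and-kernel} and the explicit non-unique factorizations produced in Theorem~\ref{theorem:nuf-bound}. Since $F$ is irreducible, $f$ is not a proper power of a polynomial in $R[x]$, so Theorem~\ref{theorem:abs-irred-and-kernel} applies and tells us that $F$ is absolutely irreducible if and only if $\fdk(f) = \tuple 0$. The implications \ref{bound:1}$\Rightarrow$\ref{bound:2} and \ref{bound:1}$\Rightarrow$\ref{bound:3} are then immediate, because an absolutely irreducible element has, by definition, only the trivial factorization of each of its powers, in particular of $F^S$. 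So the real work lies in the reverse implications \ref{bound:2}$\Rightarrow$\ref{bound:1} and \ref{bound:3}$\Rightarrow$\ref{bound:1}, which I would establish contrapositively: assuming $F$ is \emph{not} absolutely irreducible, I show that $F^S$ factors non-uniquely for each of the two stated values of $S$.

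So suppose $\fdk(f)\neq\tuple 0$. First I would apply the tailored Siegel's lemma, Lemma~\ref{lemma:siegel-version}, to the reduced fdp matrix $A$ (which has $\ker(A)=\fdk(f)$ and $u$ rows with a single non-zero entry); this yields a vector $\tuple 0\neq\tuple v\in\fdk(f)\cap\Z^{\polyset}$ with $\norm{\tuple v}_\infty\le n^{\rank(A)-u}$. Next I feed $\tuple v$ into Theorem~\ref{theorem:nuf-bound}: as $F$ is irreducible, $F^{j}$ factors non-uniquely for every $j\ge(n+1)\bigl(\Zinfnorm{\frac{\tuple v^+}{\tuple m}}+\Zinfnorm{\frac{\tuple v^-}{\tuple m}}\bigr)$. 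A one-line estimate closes the gap: every multiplicity satisfies $m_g\ge 1$ and $\tuple v$ has integer entries, so $\Zinfnorm{\frac{\tuple v^{\pm}}{\tuple m}}=\bigl\lceil\max_g v_g^{\pm}/m_g\bigr\rceil\le\norm{\tuple v}_\infty$; hence the bound in Theorem~\ref{theorem:nuf-bound} is at most $2(n+1)\norm{\tuple v}_\infty\le 2(n+1)n^{\rank(A)-u}$, so $F^S$ factors non-uniquely for $S=2(n+1)n^{\rank(A)-u}$, giving \ref{bound:3}$\Rightarrow$\ref{bound:1}. For \ref{bound:2}$\Rightarrow$\ref{bound:1} I would use the very same $\tuple v$ together with the cruder estimate $\rank(A)-u\le q^{\lceil n/2\rceil}$: indeed $A$ has full row rank, so $\rank(A)=\card W\le\card{\witnesses(f)}$, and by construction $f$ has at most $q^{\lceil n/2\rceil}$ fixed divisor witnesses (cf.~Definition~\ref{definition:fd-kernel}); therefore $2(n+1)n^{\rank(A)-u}\le 2(n+1)n^{q^{\lceil n/2\rceil}}$, and $F^S$ factors non-uniquely for $S=2(n+1)n^{q^{\lceil n/2\rceil}}$ too.

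The genuine obstacle is the Siegel-type bound $\norm{\tuple v}_\infty\le n^{\rank(A)-u}$. It is not a direct application of the classical Siegel lemma: one needs a version adapted to the specific shape of a reduced fdp matrix, exploiting that its entries are controlled by $n$ and that the $u$ rows with a single non-zero entry pin down $u$ coordinates to $0$, thereby reducing the homogeneous system to one of corank $\rank(A)-u$. Packaging this is precisely the role of Lemma~\ref{lemma:siegel-version}, so once that lemma is in hand the argument above is essentially bookkeeping: invoking Theorems~\ref{theorem:abs-irred-and-kernel} and~\ref{theorem:nuf-bound}, bounding the number of fixed divisor witnesses, and the ceiling estimate.
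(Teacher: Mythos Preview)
Your overall approach is exactly the paper's: (i)$\Rightarrow$(ii),(iii) is trivial, and the non-trivial implications are proved contrapositively by combining Theorem~\ref{theorem:abs-irred-and-kernel}, Lemma~\ref{lemma:siegel-version}, and Theorem~\ref{theorem:nuf-bound}, together with the elementary estimate $\Zinfnorm{\tuple v^{\pm}/\tuple m}\le\norm{\tuple v}_\infty$. The argument for \ref{bound:3}$\Rightarrow$\ref{bound:1} is correct as written.

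There is, however, a genuine gap in your justification of \ref{bound:2}$\Rightarrow$\ref{bound:1}. You write that ``$f$ has at most $q^{\lceil n/2\rceil}$ fixed divisor witnesses (cf.~Definition~\ref{definition:fd-kernel})'', and deduce $\rank(A)=\card W\le\card{\witnesses(f)}\le q^{\lceil n/2\rceil}$. But $\witnesses(f)$ is \emph{infinite}: as observed immediately after Remark~\ref{remark:equal-rows}, every residue class $w+p^{n+1}R$ with $w\in\witnesses(f)$ lies in $\witnesses(f)$. So the chain of inequalities breaks at the last step. The correct route to $\rank(A)\le q^{\lceil n/2\rceil}$ is Lemma~\ref{lemma:matrix-rows}: a complete set of representatives of $\witnesses(f)$ modulo $p^{\lceil n/2\rceil}R$ (of size at most $q^{\lceil n/2\rceil}$) already yields an fdp matrix $B$; since $\ker(B)=\fdk(f)=\ker(A)$, the two matrices have the same rank, and as $A$ is reduced this rank equals $\card W$, whence $\rank(A)\le q^{\lceil n/2\rceil}$. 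With this correction your proof matches the paper's.
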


Finally, in Section~\ref{sec:tightness}, we discuss the tightness of
the given bounds. We show that the bound in
Theorem~\ref{theorem:nuf-bound} cannot be improved in general. Indeed,
for any $n\ge 2$, there exists a discrete valuation domain $(R,pR)$
with finite residue field and an irreducible, integer-valued
polynomial $F = \frac{f}{p^n}\in \Int(R)$ such that the minimal $S$
for which $F^S$ factors non-uniquely is exactly the bound given in
Theorem~\ref{theorem:nuf-bound}, minimized over all feasible
$\tuple{v}$. We are not only able to determine $S$ explicitly, but
also show that it can be made larger than any predefined constant. We
point out that, given the numbers in Theorem~\ref{theorem:upperbounds},
it follows that the size of the residue field has to grow with the
size of $S$.

\begin{theorem}\label{theorem:realization}
  Let $r$, $n \ge 2$ be integers.

Then there exists a discrete valuation domain $(R,pR)$ with finite
residue field and a polynomial $F = \frac{f}{p^n}\in \Int(R)$ which is
irreducible, but \textbf{not} absolutely irreducible in \(\Int(R)\)
(both $R$ and $F$ depending on $r$) such that the minimal
exponent $S$ for which $F^S$ does not factor uniquely satisfies

\begin{enumerate}
\item\label{thm:bounds}
  $S = (n+1)\left((n-1)^{r-1} + (n-1)^{r-2}\right)$ where $r$ is the
  rank of a (reduced) fdp matrix of $f$ and
\item\label{thm:K-thm1}
  $S= (n+1)\min\!\left\{\norm{\tuple{v}^+}_\infty +
  \norm{\tuple{v}^-}_\infty \longmid \tuple 0\neq \tuple{v} \in \fdk(f)\cap
  \Z^\polyset\right\}$.
\end{enumerate}
In particular, it follows that the lower bound given in
Theorem~\ref{theorem:nuf-bound} cannot be improved in general.

\end{theorem}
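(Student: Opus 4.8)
The plan is to construct, for each pair $r, n \ge 2$, a discrete valuation domain $(R,pR)$ and a polynomial $f \in R[x]$ whose fixed divisor kernel is one-dimensional and spanned by a vector $\tuple{v}$ whose positive and negative parts realize the extremal value $(n-1)^{r-1} + (n-1)^{r-2}$ in the bound of Theorem~\ref{theorem:nuf-bound}, and then to verify that this bound is actually attained (not merely an upper bound) for the constructed $F$. I would begin by choosing the residue field large enough that enough distinct residues $a_1,\dots,a_t \in R$ are available to serve as fixed divisor witnesses, and pick irreducible polynomials $g_1,\dots,g_s \in R[x]$ (e.g.\ distinct linear polynomials $x - b_i$, or carefully chosen monic polynomials) together with prescribed valuations $\val(g_i(a_j))$, arranged so that the associated reduced fdp matrix $A$ has rank $r$ and so that $\ker(A)$ is spanned by a single integer vector $\tuple{v}$ with $\norm{\tuple{v}^+}_\infty = (n-1)^{r-1}$ and $\norm{\tuple{v}^-}_\infty = (n-1)^{r-2}$ (or the reverse). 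The combinatorial heart here is an explicit ``staircase'' construction of $A$: a bidiagonal-type integer matrix with entries of size roughly $n-1$ whose kernel vector has entries that are products of $(n-1)$'s, so that the infinity norm grows like a power of $n-1$. Taking $f = \prod_i g_i^{m_{g_i}}$ with multiplicities $\tuple{m} = \one$ (or whatever makes the $\Zinfnorm{\cdot}$ expressions collapse to plain norms) and checking $\fixdiv(f) = p^n$ and primitivity gives us the candidate $F = f/p^n$.

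Next I would verify the two numerical claims. For (ii), since $\fdk(f)$ is one-dimensional and spanned by $\tuple v$ (and its integer multiples), the quantity $\min\{\norm{\tuple v^+}_\infty + \norm{\tuple v^-}_\infty\}$ over nonzero integer kernel elements is attained precisely at $\pm\tuple v$ — here one must check that no integer multiple $\lambda\tuple v$ with $|\lambda|\ge 1$ does better, which is immediate since scaling only enlarges the norm, and that $\tuple v$ itself is primitive (no common factor among its coordinates), which we arrange by construction. This makes the right-hand side of (ii) equal to $(n+1)\big((n-1)^{r-1}+(n-1)^{r-2}\big)$, and since with $\tuple m = \one$ we have $\Zinfnorm{\tuple v^\pm/\tuple m} = \norm{\tuple v^\pm}_\infty$, (i) and (ii) then agree and both equal the bound in Theorem~\ref{theorem:nuf-bound} evaluated at $\tuple v$. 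For (i) as a statement about $r$ being the rank, this is just the construction guaranteeing $\rank(A) = r$.

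The genuinely hard part — and the crux of the theorem — is the lower bound: showing $F^S$ \emph{does} factor uniquely for every $S$ strictly below $(n+1)\big((n-1)^{r-1}+(n-1)^{r-2}\big)$, so that the bound of Theorem~\ref{theorem:nuf-bound} is not just achieved but sharp. Theorem~\ref{theorem:nuf-bound} only tells us non-unique factorization occurs \emph{from} that exponent on; I need the converse below it. The strategy is to analyze directly, using Fact~\ref{fact:iv-fac} and the correspondence between integer-valued factors of $F^j$ and elements of $\fdk(f)$ developed in Section~\ref{sec:fdkernel}, exactly which $(k_g)$ and $r$ can appear in a factor $H = \prod g^{k_g}/p^r$ of $F^j$: any non-trivial factorization forces a non-negative combinatorial decomposition compatible with a kernel element, and since the kernel is $\Z\tuple v$, the multiplicities available from $j$ copies of $f$ (namely $j\tuple m = j\one$) must dominate a positive multiple of $\tuple v^+$ and simultaneously leave $\tuple v^-$ worth of room on the complementary factor — forcing $j \ge (n+1)(\norm{\tuple v^+}_\infty + \norm{\tuple v^-}_\infty)$ by the same ceiling estimates that appear in Theorem~\ref{theorem:nuf-bound}, now run as a necessary condition. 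I would also need to confirm that $F$ itself is irreducible (so that "not absolutely irreducible" is the right assertion) — this follows by checking $f$ has no integer-valued factor with denominator a positive power of $p$ other than those forced by higher powers, i.e.\ that $F$ cannot be split already at exponent $1$, which the construction of the witness data controls. The last sentence, that the bound in Theorem~\ref{theorem:nuf-bound} cannot be improved in general, is then an immediate consequence: letting $r \to \infty$ produces irreducible non-absolutely-irreducible $F$ whose minimal splitting exponent is arbitrarily large and exactly equal to the stated bound.
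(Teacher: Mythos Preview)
Your overall architecture matches the paper's: a ``staircase'' bidiagonal fdp matrix of rank $r$ whose one-dimensional kernel is spanned by an integer vector $\tuple{v}$ with $|v_1| = (n-1)^{r-1}$, $|v_2| = (n-1)^{r-2}$ of opposite sign, and $\tuple{m}=\tuple{1}$. That part is fine.

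There is, however, a real gap in your lower-bound argument, and it is precisely where the factor $(n+1)$ enters. What you wrote --- that the constraints $0 \le k_g \le j$ together with $\tuple{k} = \frac{\ell}{n}\tuple{1} + \lambda\tuple{v}$ force $j \ge (n+1)(\norm{\tuple{v}^+}_\infty + \norm{\tuple{v}^-}_\infty)$ ``by the same ceiling estimates\ldots\ now run as a necessary condition'' --- is not correct. Those non-negativity constraints alone only yield $j \ge |\lambda|\,(\norm{\tuple{v}^+}_\infty + \norm{\tuple{v}^-}_\infty)$, i.e.\ $j \ge K$, not $j \ge (n+1)K$. The estimates in Proposition~\ref{proposition:in-IntR} establish sufficiency of the bound $(n+1)s^+$; they do not invert. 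What the paper does (Proposition~\ref{theorem:realization-prep}) is build into the construction two additional elements $a_1, a_2 \in R$ that are \emph{not} fixed divisor witnesses --- they satisfy $\val(g_i(a_i)) = n+1$ and $\val(g_j(a_i)) = 0$ for $j \neq i$, so $\val(f(a_i)) = n+1$. Evaluating the two factors of $F^j$ at $a_1$ and $a_2$ then gives the tight inequalities $k \ge (n+1)|\lambda v_2|$ and $j-k \ge (n+1)|\lambda v_1|$, whose sum is $j \ge (n+1)K$. Without engineering such ``near-witnesses'' into the construction, the sharp lower bound cannot be recovered from the kernel data alone; this ingredient is missing from your proposal.

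A secondary point you gloss over: getting \emph{irreducible} $g_i$ with the prescribed valuation profile is not free. Linear polynomials $x-b_i$ will not in general produce the required matrix rows (some $h_j$ need to contribute valuation $n-1$ or $n+1$ at specific residues and $0$ elsewhere). The paper first writes down reducible products $h_j$ of linear factors realizing the valuation data exactly, and then invokes a CRT/Eisenstein replacement lemma (Lemma~\ref{lemma:irreducible-replacements}) over $\Z$ to substitute irreducible $g_j \equiv h_j \bmod p^{n+2}$, preserving all valuations up to level $n+1$. Your sketch should at least flag that such a replacement step is needed.
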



\section{Preliminaries}
\label{sec:prelim}
\subsection{Factorizations}
We define the factorization terms that we need in this paper, and
refer to the textbook of Geroldinger and
Halter-Koch~\cite{Geroldinger-HalterKoch:2006:nuf} for a
systematic introduction to non-unique factorizations.

\begin{definition}
  Let $R$ be a commutative ring with identity.
  \begin{enumerate}
  \item We say that $r\in R$ is \emph{irreducible} in $R$ if it is a
    non-zero non-unit and it cannot be written as the product of two
    non-units of $R$.
  \item A \emph{factorization} of $r \in R$ is an expression of $r$ as
    a product of irreducibles, that is,
    \begin{equation*}
      \label{eq:fac} r = a_{1}\cdots a_{n}
    \end{equation*}
    where $n\ge 1$ and $a_i$ is irreducible in $R$ for $1\le i \le n$.
  \item We say that $r$, $s\in R$ are \emph{associated} in $R$ if
    there exists a unit $u \in R$ such that $r = us$. We denote this
    by $r \sim s$.
  \item Two factorizations of the same element,
    \begin{equation}\label{eq:2-fac-same-diff}
      r = a_{1}\cdots a_{n} = b_{1} \cdots b_{m},
    \end{equation}
    are called \emph{essentially the same} if $n = m$ and, after a
    suitable re-indexing, $a_{j}\sim b_{j}$ for $1 \leq j \leq m$.
    Otherwise, the factorizations in \eqref{eq:2-fac-same-diff} are
    called \emph{essentially different}.
  \end{enumerate}
\end{definition}

\begin{definition}\label{defabsirred}
  Let $R$ be a commutative ring with identity.  An irreducible element
  $r\in R$ is called \emph{absolutely irreducible} if for all natural
  numbers $n$, every factorization of $r^n$ is essentially the same as
  $r^n = r \cdots r$.
\end{definition}
\begin{remark}\label{remark:equiv-abs-irred}
  A straight-forward verification shows that an irreducible element
  $r$ is absolutely irreducible if and only if for every $n\in \N$ and
  every factorization $r^n = c\cdot d$ into the product of (not
  necessarily irreducible) elements $c$ and $d\in R$, it follows that
  $c \sim r^k$ and $d \sim r^{\ell}$ for some $k$, $\ell\in \N_0$.
\end{remark}

\subsection{(Integer-valued) polynomials}
We shortly summarize definitions, notation and facts surrounding
(integer-valued) polynomials over a discrete valuation domain~$(R,pR)$
which we need throughout this paper. For a deeper study of the theory
of integer-valued polynomials, we refer to the textbook of Cahen and
Chabert~\cite{Cahen-Chabert:1997:book} and their recent
survey~\cite{Cahen-Chabert:2016:survey}.

\begin{definition}\label{definition:irreducible-divisor-set}
  Let $f \in R[x]$ be a polynomial.
  \begin{enumerate}
  \item We call $f$ \emph{primitive} if the coefficients of $f$
    generate $R$ as ideal.
  \item We call a representative set $\polyset$ of the associate
    classes of the irreducible divisors of $f$ an \emph{irreducible
      divisor set of $f$}.
  \end{enumerate}
\end{definition}

\begin{definition}\label{definition:fixed-divisor}
  Let $K$ be the quotient field of $R$. The \emph{ring of
    integer-valued polynomials} on $R$ is
  \begin{equation*}
    \Int(R) = \left\{F \in K[x] \mid F(R) \subseteq R\right\}.
  \end{equation*}
  For $F\in \Int(R)$, the \emph{fixed divisor} of $F$
  is defined as
  \begin{equation*}
    \fixdiv(F) = \gcd\!\left(F(a) \mid a\in R\right).
  \end{equation*}
\end{definition}

\begin{remark}\label{remimageprimitive}
  Every polynomial $F\in \Int(R)$ is of the form $F=\frac{f}{p^n}$ for
  some $f\in R[x]$ and $n\in \N$.
  \begin{enumerate}
  \item  $F \in \Int(R)$ if and only if $p^n \mid \fixdiv(f)$.
  \item If $F \in \Int(R)$ is irreducible in $\Int(R)$, then
    $\fixdiv(f) = p^n$.
  \end{enumerate}
\end{remark}

\begin{convention}\label{convention:R}
  Throughout this paper, unless explicitly stated otherwise, let
  $(R,pR)$ be a discrete valuation domain with valuation $\val$ and
  let $q = \card{R/pR}$ be the cardinality of the finite residue field
  of $R$.

  Further, let $f\in R[x]$ be a non-constant, primitive polynomial
  with $\fixdiv(f) = p^n$ and irreducible divisor set $\polyset$ and
  let $F = \frac{f}{p^n}$.
\end{convention}

As we encounter products of a set of irreducible elements quite
frequently, we adopt a new abbreviated notation to emphasize
the focus on the exponents (inspired by the
standard notation $X^{\boldsymbol{a}}$ in multivariate polynomial rings).
\begin{notation}\label{notation:set-powers}
  Let \(\polyset\subseteq R[x]\) be a non-empty, finite set of
  polynomials. For any
  \(\tuple{m}=(m_g)_{g\in \polyset}\in\N_0^\polyset\), we write
  \begin{equation*}
    \prodtuple{\polyset}{m}= \prod_{g\in \polyset} g^{m_g}.
  \end{equation*}
\end{notation}

\begin{remark}\label{remark:polyset-egal}
  Using the notation of Convention~\ref{convention:R}, let
  $\tuple{m} = (m_g)_{g\in\polyset}\in \N^{\polyset}$ be the vector of
  the multiplicities with which $g\in \polyset$ occur as factors of $f$.

  Then, considering the fact that the units of $\Int(R)$ are exactly
  the units of $R$,
  \begin{equation*}
    F = \frac{f}{p^n} \sim \frac{\prodtuple{\polyset}{m}}{p^n} = \frac{\prod_{g\in \polyset}g^{m_g}}{p^n}
  \end{equation*}
  and the (essentially different) factorizations of $F$ correspond to
  the (essentially different) factorizations of
  $\frac{\prod_{g\in \polyset}g^{m_g}}{p^n}$. Having this in mind, we
  restrict our investigation to polynomials of the latter form for
  simplicity.
\end{remark}

The following well-known fact deals with the factorizations at
issue. It follows from the proof {\cite[Theorem
  2.8]{Chapman-McClain:2005:irred-iv-poly}}, which takes advantage of
the fact that $\Int(R)$ is a subring of the polynomial ring over the
quotient field of $R$, where we encounter unique factorization.
\begin{fact}[{\cite[Theorem 2.8]{Chapman-McClain:2005:irred-iv-poly}}]\label{fact:iv-fac}
  Let $(R,pR)$ be a discrete valuation domain with finite residue
  field and $F \in \Int(R)$ of the form
  \begin{equation*}
    F \sim \frac{\prodtuple{\polyset}{m}}{p^n} = \frac{\prod_{g\in \polyset}g^{m_g}}{p^n}
    \quad\text{ with }\quad
    \fixdiv\!\left(\prodtuple{\polyset}{m}\right) =  p^n
  \end{equation*}
  where $\tuple{m} = (m_g)_{g\in \polyset}\in \N^{\polyset}$ for
  $g\in \polyset$, $n\in\N_0$, and $\polyset\subseteq R[x]$ is a
  non-empty, finite set of irreducible, non-constant polynomials which
  are pairwise non-associated.

  If $F=F_1 \cdots F_r$ is a factorization of $F$ into (not
  necessarily irreducible) non-units in $\Int(R)$, then, for each
  $1\le j \le r$,
  \begin{equation*}
    F_j\sim  \frac{\prodtuple{\polyset}{m_j}}{p^{k_j}} = \frac{\prod_{g \in \polyset_j}g^{m_{j,g}}}{p^{k_j}}
  \end{equation*}
  where $\emptyset\neq \polyset_j\subseteq \polyset$,
  $\tuple{m_j}= (m_{j,g})_{g\in\polyset}\in \N_0^{\polyset}$, and
  $k_j\in \N_0$ such that $\sum_{j=1}^rk_j = k$ and
  $\sum_{j=1}^rm_{j,g} = m_g$ for all $g\in \polyset$.
\end{fact}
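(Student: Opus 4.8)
The plan is to lift the given factorization out of $\Int(R)$ into the polynomial ring $K[x]$ over the quotient field $K$ of $R$, where unique factorization is available, and then to pin down the scalar (denominator) part of each factor by a counting argument involving the fixed divisor. The whole point is that $\Int(R)\subseteq K[x]$ and $K[x]$ is a UFD, so the \emph{shape} of the factors (which $g$'s occur and with what multiplicities) is forced by uniqueness in $K[x]$; only the powers of $p$ require genuine work.

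First I would collect the Gauss-lemma consequences. Since $(R,pR)$ is a discrete valuation domain it is a UFD, so $R[x]$ and $K[x]$ are UFDs. Each non-constant irreducible $g\in\polyset$ is primitive (a non-constant irreducible cannot have non-unit content) and hence stays irreducible in $K[x]$; moreover distinct $g,g'\in\polyset$ stay non-associate in $K[x]$, since an associating constant $c\in K^{\times}$ would, after comparing contents of the primitive polynomials $g$ and $g'$, have to be a unit of $R$, making $g,g'$ associate already in $R[x]$. Therefore in $K[x]$ we have $F=\frac{1}{p^{n}}\prod_{g\in\polyset}g^{m_g}$ with the $g$ pairwise non-associate irreducibles and $\frac{1}{p^{n}}\in K^{\times}$ a unit. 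Given $F=F_1\cdots F_r$ in $\Int(R)\subseteq K[x]$, uniqueness in $K[x]$ now distributes the irreducible factors: for each $j$ there are a scalar $c_j\in K^{\times}$ and integers $m_{j,g}\in\N_0$ with $F_j=c_j h_j$, where $h_j=\prod_{g}g^{m_{j,g}}\in R[x]$, such that $\sum_{j}m_{j,g}=m_g$ for every $g$ and $\prod_{j}c_j=p^{-n}$; in particular $\sum_{j}\val(c_j)=-n$. It then only remains to show $\val(c_j)=-\val(\fixdiv(h_j))$, for then $c_j=u_j p^{-k_j}$ with a unit $u_j\in R$ and $k_j:=\val(\fixdiv(h_j))\ge 0$, giving $F_j\sim\frac{\prod_{g}g^{m_{j,g}}}{p^{k_j}}$ with $\sum_j k_j=n$; and $\polyset_j=\{g:m_{j,g}>0\}\neq\emptyset$ because otherwise $F_j=c_j$ would be a unit, contradicting that it is a non-unit.

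\textbf{Main step.} The crux, which I expect to be the only real obstacle, is establishing this equality of valuations. Set $d_j:=\val(\fixdiv(h_j))=\min_{a\in R}\val(h_j(a))$, using that in a discrete valuation domain the valuation of a gcd is the minimum of the valuations. Since $h_j\in R[x]$, the condition $F_j=c_j h_j\in\Int(R)$ says $\val(c_j)+\val(h_j(a))\ge 0$ for all $a\in R$, which is exactly $\val(c_j)\ge -d_j$. Summing these lower bounds gives $-n=\sum_j\val(c_j)\ge-\sum_j d_j$. On the other hand, the elementary inequality $\min_a\sum_j\val(h_j(a))\ge\sum_j\min_a\val(h_j(a))$ together with $\prod_j h_j=\prod_{g}g^{m_g}$ yields $\sum_j d_j\le\val(\fixdiv(\prod_{g}g^{m_g}))=n$. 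Combining, $-n=\sum_j\val(c_j)\ge-\sum_j d_j\ge-n$, so all the inequalities collapse to equalities: $\sum_j d_j=n$ and, crucially, each individual bound $\val(c_j)\ge-d_j$ is tight, i.e.\ $\val(c_j)=-d_j$. This is precisely what forces each $k_j=d_j\ge 0$ and $\sum_j k_j=n$.

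\textbf{Remark on the delicate point.} The one thing to handle carefully is the telescoping forcing of equality from two matched sums (a bound $\le$ and a bound $\ge$ with the same total), rather than just asserting the per-factor statement directly; it is this global constraint that rules out any factor $F_j$ carrying a genuine power of $p$ in its numerator and thereby guarantees $k_j\ge 0$. Everything else---the identification of $\val(\fixdiv(h))$ with $\min_a\val(h(a))$, primitivity of products of primitive polynomials, and the passage between $\Int(R)$ and $K[x]$ via Gauss's lemma---is routine bookkeeping.
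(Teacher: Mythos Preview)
Your proof is correct and follows precisely the approach the paper indicates (it does not spell out a proof but refers to Chapman--McClain and states that the argument ``takes advantage of the fact that $\Int(R)$ is a subring of the polynomial ring over the quotient field of $R$, where we encounter unique factorization''). Your Gauss-lemma bookkeeping and the squeeze argument for $\val(c_j)=-d_j$ are all sound; note that you in fact prove slightly more than the statement asks, namely $k_j=\val(\fixdiv(h_j))$ for each $j$, whereas only $k_j\in\N_0$ with $\sum_j k_j=n$ is required---for that weaker conclusion one could alternatively observe directly that $\val(c_i)>0$ for some $i$ would force $p\mid F$ in $\Int(R)$, contradicting $\fixdiv\bigl(\prodtuple{\polyset}{m}\bigr)=p^n$.
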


We conclude this section with a straight-forward observation on the
properties of Notation~\ref{notation:set-powers}.

\begin{remark}\label{rem:calc-w-prodtuple}
  Let \(\tuple{\ell}=(\ell_g)_{g\in \polyset}\),
  \(\tuple{m}=(m_g)_{g\in \polyset}\in \N_0^\polyset\). Using the
  notation of Convention~\ref{convention:R}, we infer
  \begin{equation*}
    \prodtuple{\polyset}{\ell} = \prodtuple{\polyset}{m} \iff
      \tuple{\ell} = \tuple{m}
  \end{equation*}
  as well as
  \begin{equation*}
    \prodtuple{\polyset}{\ell} \cdot \prodtuple{\polyset}{m} =
    \prodtuple{\polyset}{\ell+m}
    \quad\text{and}\quad
    (\prodtuple{\polyset}{m})^j = \prodnotuple
    \polyset{j\tuple{m}} \quad\text{for }j\in \N_0
  \end{equation*}
  from the definition and the fact that \(R[x]\) is factorial.

  Note also that all divisors of \(\prodtuple{\polyset}{m}\) in
  \(R[x]\) are given by the elements of \(R[x]\) which are associated
  to \(\prodtuple{\polyset}{k}\) for some
  \(\tuple{k}\in \N_0^\polyset\) with \(\tuple{k}\le \tuple{m}\)
  componentwise.
\end{remark}


\section{The fixed divisor kernel of a polynomial}
\label{sec:fdkernel}
\begin{definition}\label{definition:fd-kernel} Using the notation of
  Convention~\ref{convention:R}, we define the \emph{set of fixed
    divisor witnesses of $f$} as
  \begin{equation*}
    \witnesses(f) =  \{a \in R \mid \val(f(a)) = n\},
  \end{equation*}
  where we recall that $n=\val(\fixdiv(f))$.
  Further, we define the \emph{fixed divisor kernel of $f$} to be
  \begin{equation*}
    \fdk(f) = \bigcap_{a\in \witnesses(f)}\ker\!\left( \tuple{m}\in \Q^{\polyset} \mapsto \ip{\tuple{m}}{\val_{\polyset}(a)} \right)
  \end{equation*}
  where $\ip{\argdot}{\argdot}$ is the standard inner product and
  $\val_{\polyset}(a) = (\val(g(a)))_{g\in \polyset}$ for $a\in
  R$. (Note that the choice of $\polyset$ only affects the indexing,
  but not the values of valuations.)
\end{definition}

\begin{remark}\label{rem:prodtuple}
  Unraveling the definition yields
  $\ip{\tuple{m}}{\val_{\polyset}(a)} = \sum_{g\in \polyset}
m_g\val(g(a))$ as well as
  \begin{equation*}
    \fdk(f) = \bigg\{ \tuple{m} = (m_g)_{g\in \polyset}\in \Q^{\polyset} \,\,\big\vert \,\, \forall a\in \witnesses(f)\colon \sum_{g\in \polyset} m_g\val(g(a)) = 0  \bigg\}.
  \end{equation*}
\end{remark}

\begin{remark}\label{remark:equal-rows}
  The following observations are easily verified:
  \begin{enumerate}
  \item Whenever $r$, $s\in R$ with $\val(r-s)\ge \val(r) + 1$, then
    $\val(s) = \val(r)$.
  \item Let $g\in R[x]$,  $a$, $w\in R$, and $n\in \N$. If
    $\val(a-w)\ge n$, then $\val(g(a) - g(w)) \ge n$.
  \item\label{ri:same-val} The combination of the items above implies
    that whenever $\val(a-w) \ge \val(g(a)) + 1$, then
    $\val(g(a)) = \val(g(w))$.
  \end{enumerate}
\end{remark}

By Remark~\ref{remark:equal-rows}\ref{ri:same-val}, the set
$\witnesses(f)$ of witnesses is always infinite, since
$\witnesses(f)\neq\emptyset$ and with every $w\in \witnesses(f)$ the
(infinite) residue class $w + p^{\val(\fixdiv(f))+1}R$ is a subset of
$\witnesses(f)$.

However, as a subspace of $\Q^{\polyset}$, $\fdk(f)$ has finite
dimension and hence $\fdk(f)$ is the kernel of a linear map
$\Q^\polyset \to \Q^{W}$ for a finite set $W \subseteq \witnesses(f)$,
or equivalently, it can be represented as the kernel of a matrix.

\begin{definition}\label{def:fixed-divisor-partition-matrix}
  Let $W$ be a finite set. With the notation of
  Convention~\ref{convention:R}, we say that a matrix
  $A \in \Q^{W \times \polyset}$ is an \emph{fdp~matrix} (short for
  \emph{fixed divisor partition matrix}) of $f$ if $\ker(A) = \fdk(f)$
  (where we consider the kernel of $A$ to be the kernel of the vector
  space homomorphism $\Q^\polyset \to \Q^W$ associated to~$A$).
\end{definition}

Next, we specify how an fdp matrix of $f$ can be
determined.
\begin{lemma}\label{lemma:matrix-rows}
  With the notation of Convention~\ref{convention:R}, let $W$ be a set of
  representatives of the witness set $\witnesses(f)$ modulo
  $p^{\left\lceil\frac{n}{2}\right\rceil}R$.

  Then, for each $a\in \witnesses(f)$, there exists $w\in W$ such that
  $\val_\polyset(a) = \val_\polyset(w)$.

  In particular, the matrix
  $(\val_\polyset(w))_{w\in W} \in \Q^{W\times \polyset}$ is an fdp
  matrix of $f$.
\end{lemma}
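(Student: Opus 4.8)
The plan is to show that $W$ (a set of representatives of $\witnesses(f)$ modulo $p^{\lceil n/2\rceil}R$) ``captures'' all the linear constraints defining $\fdk(f)$, so that only finitely many rows are needed. The key claim is that for every witness $a \in \witnesses(f)$ there is a representative $w \in W$ with $\val_\polyset(a) = \val_\polyset(w)$, i.e., $\val(g(a)) = \val(g(w))$ for every $g \in \polyset$. Once this is established, the equation $\ip{\tuple m}{\val_\polyset(a)} = 0$ is literally the same equation as $\ip{\tuple m}{\val_\polyset(w)} = 0$, so intersecting the kernels over $a \in \witnesses(f)$ gives the same subspace as intersecting over $w \in W$; that is exactly the statement that $(\val_\polyset(w))_{w\in W}$ is an fdp matrix.

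For the key claim, fix $a \in \witnesses(f)$ and pick $w \in W$ with $\val(a - w) \ge \lceil n/2 \rceil$. I want $\val(g(a)) = \val(g(w))$ for each irreducible divisor $g$ of $f$. The natural tool is Remark~\ref{remark:equal-rows}\ref{ri:same-val}: it suffices to check $\val(a - w) \ge \val(g(a)) + 1$. So I need to bound $\val(g(a))$ from above by $\lceil n/2 \rceil - 1$. Here is where the hypothesis $a \in \witnesses(f)$, i.e., $\val(f(a)) = n$, enters: since $f = \prod_{h\in\polyset} h^{m_h}$, we have $n = \val(f(a)) = \sum_{h\in\polyset} m_h \val(h(a)) \ge m_g \val(g(a)) \ge \val(g(a))$ (using $m_g \ge 1$ and that all valuations of factors are nonnegative as $a \in R$, $h \in R[x]$). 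This only gives $\val(g(a)) \le n$, which is too weak when $\lceil n/2\rceil - 1 < n$.

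The main obstacle is therefore this gap: a single factor $g$ with $\val(g(a))$ close to $n$ is not directly controlled by $\val(a-w)\ge\lceil n/2\rceil$. The resolution I expect is an asymmetry argument. If $\val(g(a)) \le \lceil n/2\rceil - 1$, we are done by the reasoning above. Otherwise $\val(g(a)) \ge \lceil n/2 \rceil \ge \val(a - w)$, and I instead run the implication in the reverse direction: by Remark~\ref{remark:equal-rows}(ii), $\val(g(a) - g(w)) \ge \val(a - w)$; combined with $\val(g(a))$ being large, I can still try to force $\val(g(w)) = \val(g(a))$. More robustly, note $\val(f(w)) = \sum_h m_h \val(h(w))$, and since $w \in \witnesses(f)$ as well (because $\val(a-w) \ge \lceil n/2\rceil \ge 1 \ge$... wait, one must check $w\in\witnesses(f)$: indeed $\val(f(a)-f(w))\ge\val(a-w)\ge\lceil n/2\rceil \ge 1$ when $n\ge 1$, actually we need $\ge n+1$; but $\lceil n/2\rceil$ need not exceed $n$). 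Let me instead argue directly on each $g$: set $k = \val(g(a))$ and $k' = \val(g(w))$. By Remark~\ref{remark:equal-rows}(ii), $\val(g(a) - g(w)) \ge \lceil n/2\rceil$. If $\min(k,k') \le \lceil n/2 \rceil - 1$ then $k = k'$ by Remark~\ref{remark:equal-rows}(i). If both $k, k' \ge \lceil n/2\rceil$, then from $n = \val(f(a)) \ge m_g k \ge k \ge \lceil n/2\rceil$ and similarly $n \ge k'$; but now use that \emph{all} factors contribute: $n = \sum_h m_h \val(h(a))$ with $\val(h(a)) \ge 0$, so $m_g k \le n$, forcing (when $k \ge \lceil n/2\rceil$, so $2k \ge n$, so $m_g \le 2$) — this still needs more care. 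The clean way is: the sum $\sum_h m_h\val(h(a)) = n$ with each $\val(h(a))\ge 0$ means that at most one index can have $m_h\val(h(a)) \ge \lceil (n+1)/2\rceil$, and for all others $\val(h(a)) \le \lfloor n/2\rfloor \le \lceil n/2\rceil - 1$ when... Hmm, I would present this carefully: the point is $\lceil n/2\rceil$ is chosen precisely so that $\lfloor n/2 \rfloor \le \lceil n/2 \rceil$ and the ``big factor'' and the distance balance out — for the single possible big factor $g$, we have $\val(g(a)) \ge \lceil n/2\rceil$ hence $\val(f(a)/g(a)^{m_g}) = n - m_g\val(g(a)) \le n - \lceil n/2\rceil = \lfloor n/2\rfloor < \lceil n/2\rceil$, so $\val((f/g^{m_g})(a)) \le \lceil n/2\rceil - 1$; applying the argument to the cofactor and to the finitely many small factors pins all their valuations, and then $\val(g(a))$ is forced as $n$ minus the rest, which equals $\val(g(w))$ since $w$ is also a witness once we know the small factors agree. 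I would write this out as: first handle every $g$ with $\val(g(a)) \le \lceil n/2\rceil - 1$ directly via Remark~\ref{remark:equal-rows}\ref{ri:same-val}; there is at most one remaining $g$, and for it $\val(g(w))$ is determined by $\val(f(w)) - \sum_{h\ne g} m_h\val(h(w)) = n - \sum_{h\ne g}m_h\val(h(a)) = m_g\val(g(a))$, provided $w\in\witnesses(f)$ — which I will verify separately, noting $\val(f(w)) = n$ follows because all of $w$'s factor-valuations either equal $a$'s (for small $g$) or, for the one big factor, $\val(g(w)) \ge \lceil n/2\rceil$ forces $\val(f(w)) \ge n$ while $\val(f(w)) \ge n$ always and $\le$... — this last bit is the genuinely fiddly point and will require the cleanest bookkeeping in the proof.
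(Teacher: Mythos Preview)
Your approach is essentially the paper's: treat factors with $\val(g(a)) < \lceil n/2\rceil$ directly via Remark~\ref{remark:equal-rows}\ref{ri:same-val}, then handle the remaining ``big'' factors by subtracting from $n = \val(f(w))$. However, there is one genuine gap and one unnecessary detour.

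\textbf{Unnecessary detour.} You spend considerable effort trying to verify $w \in \witnesses(f)$, but this is immediate from the hypothesis: $W$ is by assumption a set of representatives \emph{of} $\witnesses(f)$ modulo $p^{\lceil n/2\rceil}R$, so $W \subseteq \witnesses(f)$ and hence $\val(f(w)) = n$ automatically. This collapses the ``fiddly'' endgame you describe.

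\textbf{Genuine gap.} You assert that there is \emph{at most one} $g \in \polyset$ with $\val(g(a)) \ge \lceil n/2\rceil$, and your supporting inequality $\lfloor n/2\rfloor < \lceil n/2\rceil$ fails for even $n$. In fact the set
\[
  J = \bigl\{g \in \polyset \,\big\vert\, \val(g(a)) \ge \lceil n/2\rceil\bigr\}
\]
can have two elements: when $n$ is even one may have $h_1 \neq h_2$ with $m_{h_1} = m_{h_2} = 1$ and $\val(h_1(a)) = \val(h_2(a)) = n/2$. Your subtraction argument (the $|J|=1$ case) does not cover this. The paper handles $|J|=2$ separately: by Remark~\ref{remark:equal-rows} one gets $\val(h_i(w)) \ge n/2$ for $i=1,2$, and then since $w \in \witnesses(f)$,
\[
  n = \val(f(w)) \ge m_{h_1}\val(h_1(w)) + m_{h_2}\val(h_2(w)) \ge \tfrac{n}{2} + \tfrac{n}{2} = n,
\]
forcing $\val(h_i(w)) = n/2 = \val(h_i(a))$. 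Once you observe that $w$ is a witness by hypothesis, this two-element case is short; without it, your proof is incomplete.
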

\begin{proof}
  Let $w\in W$ be the representative of $a$ modulo
  $p^{\left\lceil\frac{n}{2}\right\rceil}R$. We demonstrate that
  $\val_\polyset(a)= \val_\polyset(w)$ holds.  Remark~\ref{remark:equal-rows}\ref{ri:same-val}
  implies that $\val(g(a)) = \val(g(w))$ provided that
  $\val(g(a)) < \left\lceil\frac{n}{2}\right\rceil$.

  It remains to consider the elements of
  \begin{equation*}
    J = \left\{g \in \polyset \longmid \val(g(a)) \ge \bigg\lceil\frac{n}{2}\bigg\rceil \right\}.
  \end{equation*}
  Let $\tuple{m} = (m_g)_{g\in \polyset} \in \N^\polyset$ be the
  vector of the multiplicities with which the polynomials
  $g\in \polyset$ occur as divisors of $f$ so that
  \(f = \prodtuple{\polyset}{m}\) (see
  Notation~\ref{notation:set-powers}). Then
  \begin{equation}\label{eq:find-row}
    n = \val(f(a)) = \sum_{g\in \polyset}m_g\val(g(a))
  \end{equation}
  holds, which implies that the set $J$ contains at most two elements
  since $m_g\in \N$. We split the remainder of the proof into two
  cases, $|J| = 1$ and $|J|=2$.

  If $J = \{h\}$, then
  \begin{equation*}
    m_h\val(h(a))
    = n -\sum_{g\in \polyset\setminus J}m_g\val(g(a))
    = n -\sum_{g\in \polyset\setminus J}m_g\val(g(w))
    = m_h\val(h(w))
  \end{equation*}
  and hence $\val(h(a)) = \val(h(w))$.

  Finally, we assume that $J = \{h_1, h_2\}$ with $h_1\neq h_2$.  Due
  to Equation~\eqref{eq:find-row}, this is only possible if $n$ is
  even and $\val(h_1(a)) = \val(h_2(a)) = \frac{n}{2}$. Moreover, by
  Remark~\ref{remark:equal-rows}, $\val(h_1(w))$, $\val(h_2(w))$ are
  both at least $\frac{n}{2}$.  As $w$ is a fixed divisor witness, it
  follows that
  \begin{equation*}
    n = \val(f(w)) = \sum_{g\in \polyset}m_g\val(g(w)) \ge m_{h_1} \val(h_1(w)) + m_{h_2}\val(h_2(w)) \ge n
  \end{equation*}
  holds.  We conclude that
  $\val(h_1(w)) = \val(h_2(w)) = \frac{n}{2}$, which proves the claim.
\end{proof}

\begin{example}
  Let $f=(x^2+9)(x-5)^3(x-1)(x-7) \in \Z_{(3)}[x]$ where $\Z_{(3)}$ is
  the localization of $\Z$ at 3. A straight-forward verification shows
  that the fixed divisor of $f$ is 9. In order to find an fdp matrix,
  it suffices to find a set of representatives modulo $3$ of all the
  fixed divisor witnesses of $f$. By evaluating $f$ at $0$ and $4$, we
  can verify that both are in $\witnesses(f)$. Also, the factor
  $(x-5)^3$ of $f$ guarantees that no fixed divisor witness is
  congruent to $2$ modulo $3$. We set $W=\{0,4\}$ and apply
  Lemma~\ref{lemma:matrix-rows} to conclude that
  \begin{equation*}
    \begin{pmatrix}
      2 & 0 & 0 & 0 \\
      0 & 0 & 1 & 1
    \end{pmatrix} \in \Q^{W\times \polyset}
  \end{equation*}
  is a fixed divisor partition matrix of $f$ where
  $\polyset = \{x^2+9, x-5, x-1, x-7\}$ (and for the purpose of
  writing down the matrix, we impose the order on $W$ and $\polyset$
  with which their elements are given here).
\end{example}

In the following, we discuss the connection between $\fdk(f)$ and the question
whether $F = \frac{f}{p^n}$ is absolutely irreducible (where
$\val(\fixdiv(f)) = n$). We start with the connection to
integer-valued divisors of powers of $F$.

\begin{lemma}\label{lemma:factors-and-kernelA}
  Using the notation of Convention~\ref{convention:R}, let
  $f = \prod_{g\in \polyset}g^{m_g} \in R[x]$ where
  \(\tuple{m} = (m_g)_{g\in \polyset}\in\N^\polyset\) is the vector of
  the corresponding multiplicities.

  If $\tuple 0\neq \tuple{k}= (k_g)_{g\in \polyset}\in \N_0^{\polyset}$ and $\ell\in \N_0$ such that
  $\frac{\prod_{g\in \polyset}g^{k_g}}{p^\ell}$ divides $F^j$ in $\Int(R)$ for
  some $j\in \N$, then
  \begin{equation*}
    \tuple{k}\in \frac{\ell}{n}\tuple{m} + \fdk(f).
  \end{equation*}
\end{lemma}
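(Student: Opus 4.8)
The plan is to exploit the fact that divisibility in $\Int(R)$ is governed, via Fact~\ref{fact:iv-fac}, purely by the valuations of the numerator polynomials at points of $R$, and in particular at fixed divisor witnesses of $f$. Suppose $H = \frac{\prod_{g\in\polyset}g^{k_g}}{p^\ell}$ divides $F^j$ in $\Int(R)$; then $F^j = H \cdot H'$ for some $H' \in \Int(R)$, and since
\begin{equation*}
  F^j \sim \frac{\prodtuple{\polyset}{j\tuple m}}{p^{jn}}
\end{equation*}
by Remark~\ref{rem:calc-w-prodtuple}, Fact~\ref{fact:iv-fac} forces $H' \sim \frac{\prod_{g\in\polyset}g^{k'_g}}{p^{\ell'}}$ with $k_g + k'_g = jm_g$ for all $g$ and $\ell + \ell' = jn$. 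The point is that both $H$ and $H'$ are genuinely integer-valued, which means $p^\ell \mid \fixdiv(\prod g^{k_g})$ and $p^{\ell'} \mid \fixdiv(\prod g^{k'_g})$.

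Next I would evaluate these fixed-divisor conditions at a witness $a \in \witnesses(f)$. For such $a$ we have $\sum_{g\in\polyset} m_g \val(g(a)) = \val(f(a)) = n$, i.e. $\ip{\tuple m}{\val_\polyset(a)} = n$. The integer-valuedness of $H$ gives $\ell \le \val\!\bigl(\prod g^{k_g}(a)\bigr) = \ip{\tuple k}{\val_\polyset(a)}$, and likewise $\ell' \le \ip{\tuple k'}{\val_\polyset(a)} = \ip{j\tuple m - \tuple k}{\val_\polyset(a)} = jn - \ip{\tuple k}{\val_\polyset(a)}$, i.e. $\ip{\tuple k}{\val_\polyset(a)} \le jn - \ell'= \ell$ (using $\ell+\ell'=jn$). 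Combining the two inequalities yields
\begin{equation*}
  \ip{\tuple k}{\val_\polyset(a)} = \ell = \frac{\ell}{n}\, \ip{\tuple m}{\val_\polyset(a)} = \ip{\tfrac{\ell}{n}\tuple m}{\val_\polyset(a)},
\end{equation*}
so $\ip{\tuple k - \tfrac{\ell}{n}\tuple m}{\val_\polyset(a)} = 0$. Since this holds for every $a \in \witnesses(f)$, the vector $\tuple k - \tfrac{\ell}{n}\tuple m$ lies in $\bigcap_{a\in\witnesses(f)} \ker(\tuple m' \mapsto \ip{\tuple m'}{\val_\polyset(a)}) = \fdk(f)$ by Definition~\ref{definition:fd-kernel}, which is exactly the claim $\tuple k \in \tfrac{\ell}{n}\tuple m + \fdk(f)$.

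The main thing to be careful about — more a bookkeeping point than a real obstacle — is pinning down the two inequalities and their consequence that $\ip{\tuple k}{\val_\polyset(a)}$ equals $\ell$ \emph{exactly} at every witness: one direction comes from $H$ being integer-valued, the other from $H'$ being integer-valued together with $\ell+\ell'=jn$ and the witness identity $\ip{\tuple m}{\val_\polyset(a)}=n$. It is worth noting that we never need $H'$ or $H$ to be \emph{irreducible}, only non-unit and integer-valued, and $F^j$ need not be decomposed into irreducibles for Fact~\ref{fact:iv-fac} to apply. One should also remark that $\tfrac{\ell}{n}\tuple m$ need not be an integer vector, but that is fine: $\fdk(f)$ is a $\Q$-subspace of $\Q^\polyset$, so the affine coset $\tfrac{\ell}{n}\tuple m + \fdk(f)$ is a perfectly good subset of $\Q^\polyset$ containing the integer vector $\tuple k$.
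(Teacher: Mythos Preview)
Your proposal is correct and follows essentially the same approach as the paper's proof: both arguments take the cofactor of $H$ in $F^j$, use integer-valuedness of the two factors to get the pair of inequalities $\ell \le \ip{\tuple{k}}{\val_\polyset(a)}$ and $jn-\ell \le \ip{j\tuple{m}-\tuple{k}}{\val_\polyset(a)}$ at a witness $a$, and then observe that these must both be equalities because their sum is already $jn$. The paper packages this as a single inequality chain $jn = \val(f_1(w)) + \val(f_2(w)) \ge \ell + (jn-\ell) = jn$, while you spell out the two directions separately, but the content is identical. One cosmetic point: you invoke Fact~\ref{fact:iv-fac} to obtain the form of $H'$, but that fact is phrased for factorizations into non-units; you could bypass it (as the paper does) by simply writing down the cofactor directly from unique factorization in $K[x]$, which also covers the degenerate case $H \sim F^j$.
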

\begin{proof}
  Using the abbreviated notation for products of polynomials,
  cf.~Notation~\ref{notation:set-powers}, we write
  $f = \prodtuple \polyset m$ and set $f_1 = \prodtuple \polyset k$
  and $f_2 = \prodnotuple{\polyset}{j\tuple m - \tuple k}$.  Then
  $\frac{f_2}{p^{jn-\ell}}$ is the cofactor to $\frac{f_1}{p^\ell}$ of $F^j$
  in $\Int(R)$ and $\val(f_1(a)) \ge \ell$ and $\val(f_2(a)) \ge jn-\ell$
  hold for all $a\in R$.

  If $w\in \witnesses(f)$ is a fixed divisor witness of $f$,
  Remark~\ref{rem:prodtuple} yields
  \begin{align*}
    jn
    &= j \val(f(w)) = j\ip{\tuple{m}}{\val_\polyset(w)}
    = \ip{\tuple{k}}{\val_\polyset(w)} + \ip{j\tuple{m} - \tuple{k}}{\val_\polyset(w)} \\
    &= \val(f_1(w)) + \val(f_2(w)) \ge \ell + (jn-\ell) = jn,
  \end{align*}
  implying equality throughout, in particular
  $\ell = \val(f_1(w)) = \ip{\tuple{k}}{\val_\polyset(w)}$.
  Then $\tuple k$ is a solution to the
  linear equation $\ip{\tuple{x}}{\val_\polyset(w)} = \ell$.  Since
  $\frac{\ell}{n}\tuple{m}$ is another solution to it (cf.\
  Remark~\ref{rem:prodtuple}), it follows that
  \begin{equation*}
    \tuple k - \frac{\ell}{n} \tuple m \in
    \ker\!\left( \tuple{x}\in \Q^{\polyset} \mapsto \ip{\tuple{x}}{\val_{\polyset}(w)} \right)
  \end{equation*}
  for all $w\in \witnesses(f)$ and thus
  \(\tuple k - \frac{\ell}{n} \tuple m \in \fdk(f)\).
\end{proof}


Next, we show how we can use non-zero elements in $\fdk(f)$ to
construct specific polynomials in $\Int(R)$, which will turn out to be
non-trivial divisors of $\frac{f}{p^n}$.

\begin{definition}\label{definition:v-plus}
  Let \(\tuple{u}\), \(\tuple{v} \in \Q^\polyset\). The positive and
  negative part of \(\tuple{u}\) are defined by
  \begin{equation*}
    \tuple{u}^+ = \max(\tuple{u}, \tuple{0}) \quad\text{and}\quad \tuple{u}^- = - \min(\tuple{u}, \tuple{0}),
  \end{equation*}
  respectively, where $\max$ and $\min$ are to be understood
  componentwise and \(\tuple{0} = (0)_{g\in \polyset}\) denotes the
  zero vector. We further write
  \(\|\tuple{u}\|_\infty= \max_{g \in \polyset} \abs{u_g}\) for the
  usual infinity norm of \(\tuple{u} = (u_g)_{g\in \polyset}\) and
  \begin{equation*}
    \Zinfnorm{\tuple{u}} = \big\lceil \|\tuple{u}\|_\infty \big\rceil.
  \end{equation*}

  Finally, for $\tuple{u} = (u_g)_{g\in \polyset}$ and
  $\tuple{v} = (v_g)_{g\in \polyset}$ in $\Q^{\polyset}$, we define
  \begin{equation*}
    \tuple{u} \cdot \tuple{v} = (u_g \cdot v_g)_{g\in \polyset} \qquad\text{and}\qquad \frac{\tuple{u}}{\tuple{v}} =
    \left(\frac{u_g}{v_g}\right)_{g\in \polyset},
  \end{equation*}
  where the latter is only defined whenever all $v_g$ are
  non-zero. The unit element with respect to this multiplication
  is \(\tuple{1} = (1)_{g\in \polyset}\).
\end{definition}

\begin{proposition}\label{proposition:in-IntR}
  Using the notation of Convention~\ref{convention:R}, let
  $f = \prod_{g\in \polyset}g^{m_g} \in R[x]$ where
  \(\tuple{m} = (m_g)_{g\in \polyset}\in\N^\polyset\) is the vector of
  the corresponding multiplicities.

  If
  $\tuple 0\neq \tuple{v} = (v_g)_{g\in \polyset} \in \fdk(f) \cap
  \Z^{\polyset}$ and \(k \in \N\) with
  $k\ge (n+1) \Zinfnorm{\frac{\tuple{v}^+}{\tuple{m}}}$, then
  \begin{equation*}
    H = \frac{\prod_{g\in\polyset}g^{k m_g - v_g}}{p^{kn}} \in \Int(R).
  \end{equation*}
  Moreover, $H$ is not a power of $\frac{f}{p^n}$.
\end{proposition}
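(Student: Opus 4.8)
The plan is to verify directly that $H$ lies in $\Int(R)$ by checking the fixed divisor condition from Remark~\ref{remimageprimitive}, namely that $p^{kn}$ divides $\val$ of the numerator polynomial evaluated at every $a\in R$; that is, setting $h = \prod_{g\in\polyset}g^{km_g-v_g}$, I must show $\val(h(a))\ge kn$ for all $a\in R$. First I would observe that the exponents $km_g - v_g$ are genuinely non-negative integers: they are integers since $\tuple{v}\in\Z^\polyset$, and the hypothesis $k\ge (n+1)\Zinfnorm{\tuple{v}^+/\tuple{m}}$ forces $km_g \ge (n+1) v_g^+ \ge v_g^+ \ge v_g$ for every $g$, so $h\in R[x]$ and the expression for $H$ makes sense. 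Then I would split into the two cases governing $\val(h(a))$.

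For $a\in\witnesses(f)$ (i.e.\ $\val(f(a)) = n$), I would compute
\begin{equation*}
  \val(h(a)) = \sum_{g\in\polyset}(km_g - v_g)\val(g(a)) = k\,\val(f(a)) - \ip{\tuple{v}}{\val_\polyset(a)} = kn - 0 = kn,
\end{equation*}
using $\ip{\tuple{m}}{\val_\polyset(a)} = \val(f(a)) = n$ from Remark~\ref{rem:prodtuple} and the defining property of $\fdk(f)$ (Remark~\ref{rem:prodtuple} again) that $\ip{\tuple{v}}{\val_\polyset(a)} = 0$ for $a\in\witnesses(f)$. For $a\notin\witnesses(f)$, by definition of the witness set and the fact that $n = \val(\fixdiv(f))\le\val(f(a))$ always, we have $\val(f(a))\ge n+1$. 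Here the point is that subtracting the $v_g$ from the exponents can only lose a bounded amount of valuation: writing $\tuple{v} = \tuple{v}^+ - \tuple{v}^-$, we get $km_g - v_g \ge km_g - v_g^+ = m_g(k - v_g^+/m_g)$, hence
\begin{equation*}
  \val(h(a)) \ge \sum_{g\in\polyset}(km_g - v_g^+)\val(g(a)) \ge \left(k - \Zinfnorm{\tfrac{\tuple{v}^+}{\tuple{m}}}\right)\sum_{g\in\polyset}m_g\val(g(a)) = \left(k - \Zinfnorm{\tfrac{\tuple{v}^+}{\tuple{m}}}\right)\val(f(a)),
\end{equation*}
provided every bracketed factor $k - v_g^+/m_g$ is non-negative, which holds since $k\ge (n+1)\Zinfnorm{\tuple{v}^+/\tuple{m}}\ge\Zinfnorm{\tuple{v}^+/\tuple{m}}\ge v_g^+/m_g$. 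Now with $\val(f(a))\ge n+1$ and $k - \Zinfnorm{\tuple{v}^+/\tuple{m}}\ge (n+1)\Zinfnorm{\tuple{v}^+/\tuple{m}} - \Zinfnorm{\tuple{v}^+/\tuple{m}} = n\Zinfnorm{\tuple{v}^+/\tuple{m}}$, the product is at least $n(n+1)\Zinfnorm{\tuple{v}^+/\tuple{m}}$; and since also $k\ge(n+1)\Zinfnorm{\tuple{v}^+/\tuple{m}}$ gives $kn \le n(n+1)\cdot(\text{something})$... more carefully, I want $\val(h(a))\ge kn$, and since $\val(f(a))\ge n+1 = \tfrac{n+1}{n}\cdot n$ it suffices that $(k - \Zinfnorm{\tuple{v}^+/\tuple{m}})(n+1)\ge kn$, i.e.\ $k\ge(n+1)\Zinfnorm{\tuple{v}^+/\tuple{m}}$, which is exactly the hypothesis. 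This is the delicate inequality-chasing step and the main technical point of the proof; the rest is bookkeeping.

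Finally, for the assertion that $H$ is not a power of $\frac{f}{p^n}$: if $H = \bigl(\frac{f}{p^n}\bigr)^t$ for some $t\in\N_0$, comparing the denominators forces $kn = tn$, so $t = k$, and then comparing numerators via Remark~\ref{rem:calc-w-prodtuple} (equality of $\prodtuple{\polyset}{\cdot}$ implies equality of exponent vectors) gives $km_g - v_g = km_g$ for all $g$, i.e.\ $\tuple{v} = \tuple{0}$, contradicting $\tuple{v}\neq\tuple{0}$. (The case $t = 0$, $H = 1$, is likewise impossible since $H$ has positive-degree numerator because $\sum_g(km_g - v_g) \ge k\sum_g m_g - \sum_g v_g^+ > 0$ as $f$ is non-constant and $k$ is large.) I expect no obstacle in this last part; the only place requiring genuine care is the valuation estimate for non-witnesses, where one must track that the "loss" coming from the $v_g$ is controlled by $\Zinfnorm{\tuple{v}^+/\tuple{m}}$ and exploit the extra factor $n+1$ in the lower bound on $k$ to absorb it against the gain $\val(f(a))\ge n+1$.
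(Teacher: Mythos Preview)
Your proof is correct and follows essentially the same approach as the paper's: both split into the witness case (where $\ip{\tuple{v}}{\val_\polyset(a)}=0$ gives $\val(h(a))=kn$ directly) and the non-witness case (where the slack $\val(f(a))\ge n+1$ is played off against the hypothesis $k\ge(n+1)\Zinfnorm{\tuple{v}^+/\tuple{m}}$), and the ``not a power'' argument is identical. The only cosmetic differences are that the paper isolates the case $\val(g(a))=\infty$ explicitly (you absorb it into the non-witness estimate, which is fine since all exponents $km_g-v_g$ are strictly positive) and organizes the key inequality as $k(\val(f(a))-n)\ge\ip{\tuple{v}^+}{\val_\polyset(a)}$ rather than your equivalent $(k-\Zinfnorm{\tuple{v}^+/\tuple{m}})\val(f(a))\ge kn$.
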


\begin{remark}\label{remark:exponents-in-detail}
  Set
  $s^+ =
  \Zinfnorm{\frac{\tuple{v}^+}{\tuple{m}}}$ and
  note that all components of \(\tuple{m}\) are positive.
  By definition of $\tuple{v}^+$, the identity
  \begin{equation*}
    s^+
    = \max\!\left\{\left\lceil\frac{v_g}{m_g}\right\rceil \longmid g\in \polyset \text{ with } v_g \ge 0\right\}
  \end{equation*}
  holds and we have $s^+>0$ since
  $\tuple 0\neq \val_{\polyset}(w)\in \N_0^{\polyset}$ for all fixed divisor
  witnesses $w$ of~$f$.

  The definition also immediately implies \(s^+\tuple{m} \ge \tuple{v}^+\).

  Observe further that \(k\tuple{m} - \tuple{v}\) only consists of
  positive integers because the positivity of \(k\) entails either
  \(k > 0 \ge \frac{v_g}{m_g}\), if \(v_g \le 0\), or
  \(k\ge (n+1) s^+ \ge (n+1)\frac{v_g}{m_g} > \frac{v_g}{m_g}\), if
  \(v_g > 0\).
\end{remark}

\begin{proof}
  Again, we switch to the abbreviation for writing polynomial products
  (Notation~\ref{notation:set-powers}) and set
  $\tilde f = \prodnotuple{\polyset}{k \tuple{m} - \tuple{v}}$ to be the
  numerator of $H$.

  We need to show
  \begin{equation}\label{eq:ntfactor}
    \val(\tilde f(a)) \ge kn\qquad \text{for all \(a \in R\).}
  \end{equation}

  Assume first that $\val(h(a)) = \infty$ for some $h\in \polyset$,
  that is, at least one entry of \(\val_\polyset(a)\) is equal to
  \(\infty\). Then the strict positivity of
  \(k\tuple{m} - \tuple{v}\), as observed in
  Remark~\ref{remark:exponents-in-detail}, assures that the component
  of \(k\tuple{m} - \tuple{v}\) corresponding to $h$ is positive and hence
  \begin{equation*}
    \val(\tilde f(a)) = \ip{k\tuple{m} -
      \tuple{v}}{\val_\polyset(a)} = \infty \ge kn.
  \end{equation*}

  For the remainder of the proof we may thus assume that
  $\val(g(a)) < \infty$ for all $g\in
  \polyset$. Then we obtain
  \begin{equation}
  	\begin{aligned}\label{eq:val:tildef}
  		\val(\tilde f(a))
  		= \ip{k \tuple{m} - \tuple{v}}{\val_\polyset(a)}
  		&= k \ip{\tuple{m}}{\val_\polyset(a)} - \ip{\tuple{v}}{\val_\polyset(a)} \\
  		&= k \cdot \val(f(a)) - \ip{\tuple v}{\val_\polyset(a)}
  	\end{aligned}
  \end{equation}
  from Remark~\ref{rem:prodtuple}, as the difference is now well-defined.

  If \(a\) is a fixed divisor witness, then
  \(\ip{\tuple{v}}{\val_\polyset(a)} = 0\), so
  \(\val(\tilde f(a)) = k\val(f(a)) = kn\) by~\eqref{eq:val:tildef}.

  It remains to show~\eqref{eq:ntfactor} for all $a \in R$ with
  $\val(f(a)) \ge n+1$. We claim that
  \begin{equation}\label{eq:needthis}
   k \val(f(a)) \ge kn + \ip{\tuple{v}^+}{\val_\polyset(a)}
  \end{equation}
  holds in that case.

  Assume the validity of~\eqref{eq:needthis} for the moment. Then
  we obtain
  \begin{equation*}
    \val(\tilde f(a)) = k \val(f(a)) - \ip{\tuple{v}}{\val_\polyset(a)}
    \ge kn + \ip{\tuple{v}^+}{\val_\polyset(a)} - \ip{\tuple{v}}{\val_\polyset(a)} \ge kn
  \end{equation*}
  from Equation~\eqref{eq:val:tildef} and \(\tuple{v}^+ \ge \tuple{v}\),
  which finishes the proof of~\eqref{eq:ntfactor}.

  We only need to prove Inequality~\eqref{eq:needthis}. Set
  $s^+ = \Zinfnorm{\frac{\tuple{v}^+}{\tuple{m}}}$,
  as in Remark~\ref{remark:exponents-in-detail}, and
  \(\val(f(a)) = n+j\) for some \(j \in \N\).  Since
  \(k \ge (n+1) s^+\) and \(s^+\tuple{m} \ge \tuple{v}^+\), we conclude
  that
  \begin{align*}
    k \big(\val(f(a))-n\big) = kj
    &\ge (n+1)s^+j = ns^+j + s^+j \\
    &\ge ns^+ +s^+j = s^+(n+j) = s^+\val(f(a))\\
    &= s^+\ip{\tuple{m}}{\val_\polyset(a)} = \ip{s^+\tuple{m}}{\val_\polyset(a)} \ge \ip{\tuple{v}^+}{\val_\polyset(a)},
  \end{align*}
  which is equivalent to~\eqref{eq:needthis}.

  Finally, we show that $H$ is not a power of $F$. Suppose the
  contrary. Then
  \begin{equation*}
    \frac{\prodnotuple{\polyset}{k\tuple{m} -\tuple{v}}}{p^{kn}} = \frac{\prodnotuple{\polyset}{t\tuple{m}}}{p^{tn}}
    \iff p^{tn} \prodnotuple{\polyset}{k\tuple{m} -\tuple{v}} = p^{kn} \prodnotuple{\polyset}{t\tuple{m}}
    \end{equation*}
    for some \(t \in \N_0\), which entails \(t = k\) and
    \(\prodnotuple{\polyset}{k\tuple{m} - \tuple{v}} =
    \prodnotuple{\polyset}{t\tuple{m}} =
    \prodnotuple{\polyset}{k\tuple{m}}\) since \(p\) is also a prime
    element in \(R[x]\). But then \(\tuple{v} = \tuple 0\) since $R[x]$ is
    factorial (cf.~Remark~\ref{rem:calc-w-prodtuple}), contradicting
    the assumption on \(\tuple{v}\).
\end{proof}

We are now ready to prove the main results of this section.

\begin{reptheorem}{theorem:nuf-bound}
  
\end{reptheorem}
\begin{rem*}
  Observe that \((-\tuple{v})^+ = \tuple{v}^-\), so that the second
  summand of the lower bound corresponds to the lower bound from
  Proposition~\ref{proposition:in-IntR} for \(-\tuple{v}\).
\end{rem*}
\begin{proof}
  We can assume that $F$ is irreducible since otherwise it is also not
  absolutely irreducible. Let
  $\tuple 0\neq \tuple{v}\in \fdk(f) \cap \Z^{\polyset}$.  It suffices to
  show that
  $F^j$ with $j={(n+1) \left(\Zinfnorm{\frac{\tuple{v}^+}{\tuple{m}}} +
    \Zinfnorm{\frac{\tuple{v}^-}{\tuple{m}}}\right)}$ factors
  non-uniquely.  Let $k$, $\ell\in \N$ with
  \begin{equation*}
    k\ge (n+1)\Zinfnormb{\frac{\tuple{v}^+}{\tuple{m}}} \quad\text{ and }\quad
    \ell \ge (n+1)\Zinfnormb{\frac{\tuple{v}^-}{\tuple{m}}}.
  \end{equation*}
  Using the abbreviated notation for polynomial products
  (Notation~\ref{notation:set-powers}), it follows from
  Proposition~\ref{proposition:in-IntR} that
  \begin{equation*}
    \frac{\prodnotuple{\polyset}{k\tuple{m} - \tuple{v}}}{p^{kn}} \in \Int(R)
  \end{equation*}
  and that this polynomial is not a power of $F$.

  Similarly, we can apply Proposition~\ref{proposition:in-IntR} with
  $-\tuple{v}$ (which is also a non-zero element of
  $\fdk(f) \cap \Z^{\polyset}$) and conclude that
  \begin{equation*}
    \frac{\prodnotuple \polyset{\ell\tuple{m} - (-\tuple{v})}}{p^{\ell n}}
    = \frac{\prodnotuple \polyset{\ell\tuple{m} +\tuple{v}}}{p^{\ell n}}\in \Int(R),
  \end{equation*}
  again not a power of $F$.  Therefore,
  \begin{equation*}
    F^{k +\ell} = \frac{\prodnotuple{\polyset}{(k+\ell)\tuple{m}}}{p^{(k+\ell)n}}
    = \frac{\prodnotuple{\polyset}{k\tuple{m} - \tuple{v}}}{p^{kn}} \cdot
    \frac{\prodnotuple{\polyset}{\ell\tuple{m} + \tuple{v}}}{p^{\ell n}}
  \end{equation*}
  is a factorization of $F^{k +\ell}$ (not necessarily into
  irreducibles) which yields a factorization of $F^{k+\ell}$ into
  irreducibles different from the trivial one $F \cdot F \cdots F$
  (see Remark~\ref{remark:equiv-abs-irred}). It follows that $F^j$
  factors non-uniquely for every exponent $j$ with
  $j \ge (n+1) \left(\Zinfnorm{\frac{\tuple{v}^+}{\tuple{m}}}
  + \Zinfnorm{\frac{\tuple{v}^-}{\tuple{m}}}\right)$.
\end{proof}

\begin{reptheorem}{theorem:abs-irred-and-kernel}
  
\end{reptheorem}

\begin{proof}
  Assume first \(\fdk(f) = \boldsymbol 0\), let \(\polyset\) be an
  irreducible divisor set of \(f\), write \(f \sim \prodtuple{\polyset}{m}\)
  (cf.~Remark~\ref{remark:polyset-egal}), and
  set $F = \frac{f}{p^n}$.  Let $\tilde F$ be a non-constant factor
  of $F^j$ in \(\Int(R)\) for some \(j \in \N\).  We show that
  $\tilde F$ is itself (associated to) a power of $F$.
  Note that for $j=1$ this implies that $F$ is irreducible. 

  It follows from Fact~\ref{fact:iv-fac}, with the abbreviated
  notation for polynomial products (Notation~\ref{notation:set-powers}),
  that \(\tilde{F} \sim \frac{\prodtuple{\polyset}{k}}{p^\ell}\) for some
  \(\ell \in \N_0\) and $\tuple 0\neq \tuple{k} \in \N_0^\polyset$.

  We can apply Lemma~\ref{lemma:factors-and-kernelA} to infer
  \begin{equation*}
    \tuple{k} - \frac{\ell}{n}\tuple{m} \in \fdk(f) = \boldsymbol 0
    \quad\implies\quad \frac{\ell}{n}\tuple{m} = \tuple{k}\in\N_0^\polyset.
  \end{equation*}
  Since $f$ is assumed not a proper power of another polynomial in
  $R[x]$, we must have\footnote{to be understood as the \(\gcd\) of
    all components of \(\tuple{m}\)} $\gcd(\tuple{m}) = 1$ and hence
  $\ell = t n$ for some $t \in \N_0$ and
  $\tuple{k} = t \tuple{m}$.  It follows that
  $\tilde F \sim \Big(\frac{\prodtuple{\polyset}{m}}{p^n}\Big)^t
  \sim F^t$, which was
  to be shown.

  Conversely, if $\fdk(f) \neq \boldsymbol 0$, then $\frac{f}{p^n}$ is not
  absolutely irreducible according to Theorem~\ref{theorem:nuf-bound}.
\end{proof}

\begin{remark}
  Let $D$ be a Dedekind domain, $P$ a maximal ideal of $D$ with finite
  index, and $F = \frac{f}{c} \in \Int(D)$ with $f\in D[x]$ primitive,
  non-constant, not a proper power of another polynomial and
  $0\neq c\in D$. Since $\Int(D) \subseteq \Int(D_P)$
  (see~\cite[Theorem~I.2.3]{Cahen-Chabert:1997:book}), we can regard
  $F$ as integer-valued polynomial over the discrete valuation domain
  $D_P$. If $\fdk(f) = \boldsymbol 0$, then $F$ is absolutely irreducible in
  $\Int(D_P)$ and a straight-forward argument (see, for
  example,~\cite[Corollary~6.11]{Frisch-Nakato-Rissner:2022:split})
  shows that $F$ is absolutely irreducible in $\Int(D)$,
\end{remark}

\begin{remark}
  We point out that there is a loose relation between the fdp matrices
  introduced here and the partition matrices defined in the paper of
  Frisch, Nakato and
  Rissner~\cite[Definition~5.5]{Frisch-Nakato-Rissner:2022:split} on
  absolute irreducibility of completely split polynomials in
  $\Int(R)$. Partition matrices, however, have trivial kernel
  (\cite[Proposition~6.5]{Frisch-Nakato-Rissner:2022:split}). If the
  split integer-valued polynomial to which a partition matrix
  \(A\) is associated is absolutely irreducible, \(A\) is indeed
  an fdp matrix upon suitable interpretation of row and column sets.
  This is not the case for split polynomials which are not absolutely
  irreducible.
  \end{remark}


\section{Bounds for the minimal power of a non-absolutely irreducible
\label{sec:bounds}
	polynomial to factor non-uniquely}
As pointed out in the introduction, for $F\in \Int(R)$ of the form
$\frac{f}{p}$ it suffices to check whether $F^2$ factors uniquely to
verify absolute irreducibility.  This motivates the question whether
an analogous statement holds if the denominator of $F$ is not
square-free: Is it possible to conclude that $F$ is absolutely
irreducible whenever $F^k$ factors uniquely for $1\le k \le S$ for
some $S$ which is yet to be determined?

Theorem~\ref{theorem:nuf-bound} provides an upper bound for such $S$ for an
integer-valued polynomial $F$ depending on the (absolute) values of
coordinates of integer elements in the fixed divisor kernel of
the numerator polynomial \(f\) of $F$.

In the following, we will provide, among others, a bound $S$ 
which only depends on the valuation of the denominator of $F$
and the size of the finite residue field of the underlying
discrete valuation domain $R$. To do so, we look for an upper bound for
\begin{equation*}
  \min\!\left\{\norm{\tuple{v}}_{\infty} \longmid \tuple 0\neq \tuple{v}\in \fdk(f)\cap \Z^\polyset \right\},
\end{equation*}
which  motivates the following definition.
\begin{definition}\label{definition:reduced-fdp}
  Let $f$ be as in notation of Convention~\ref{convention:R}. We call an fdp
  matrix of \(f\) \emph{reduced} if its rows are $\Q$-linearly
  independent.
\end{definition}
We hence look for an upper bound for
\begin{equation*}
  \min\!\left\{\norm{\tuple{v}}_{\infty} \longmid \tuple 0\neq \tuple{v}\in \ker(A)\cap \Z^\polyset \right\}
\end{equation*}
where $A$ is a reduced fdp matrix of $f$. To this end, we use a
Siegel-type lemma which we tailored to the type of matrices which
occur as (reduced) fdp matrices. The proof below is essentially the
same as the one in Baker's textbook~\cite[Lemma~1,
Chapter~3]{Baker:1990:transnth}, the modifications merely take the
additional assumptions on the system matrix into account.

\begin{lemma}[{Adapted version of Siegel's
    lemma}]\label{lemma:siegel-version}
  Let $r\in\N_0$, $s\in\N$, and $I$, $J$ be sets with $\card{I}=r$, $\card{J}=r+s$.
  Let $n \in \N$ and $A = (a_{i,j})_{(i,j)\in I \times J} \in \N_0^{I \times J}$ be such that
  $\sum_{j\in J} a_{i,j} \le n$ for all $i\in I$.
  Assume further that $A$ contains $u$ rows with exactly one non-zero entry.

  Then
  \begin{equation*}
    \min\!\left\{\norm{\tuple{v}}_{\infty} \longmid \tuple 0\neq \tuple{v}\in \ker(A)\cap \Z^J \right\} \le
    \left\lfloor n^{\frac{r-u}{s}}\right\rfloor.
  \end{equation*}
\end{lemma}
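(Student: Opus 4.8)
The plan is to mimic the classical pigeonhole proof of Siegel's lemma (as in Baker's book), but to exploit the two structural hypotheses on $A$: that its entries are non-negative integers with each row sum bounded by $n$, and that $u$ of its rows have exactly one non-zero entry. The first hypothesis replaces the usual bound $\norm{A\tuple{v}}_\infty \le r\cdot(\max|a_{i,j}|)\cdot H$ on the image of the box $\{\tuple{v} : \norm{\tuple{v}}_\infty \le H\}$ by the much sharper bound coming from row sums; the second lets us peel off the $u$ trivial rows, because a row with a single non-zero entry $a_{i,j_0}$ forces $v_{j_0}=0$ for any integer kernel element, so those coordinates and those rows can be deleted, effectively reducing $r$ to $r-u$ (and $|J|$ to $r-u+s$) without changing the minimum-norm problem.

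First I would dispose of the degenerate cases: if $r=u$ (in particular if $r=0$) then after deleting the trivial rows there are no constraints left, so any standard basis vector lies in $\ker(A)$ and the minimum is $1 = \lfloor n^{0}\rfloor$, matching the claimed bound since $n^{(r-u)/s}=n^0=1$. Likewise $n=1$ forces every entry to be $0$ on the non-trivial rows and the bound is again $1$. So assume $n\ge 2$ and $r>u$. Next, the reduction step: let $I'\subseteq I$ be the $r-u$ rows with at least two non-zero entries and $J'\subseteq J$ the columns not singled out by a trivial row; one checks $\ker(A)\cap\Z^J$ maps isomorphically (by restriction) onto $\ker(A')\cap\Z^{J'}$ where $A'=(a_{i,j})_{I'\times J'}$, and $|J'| = |J| - u' \ge (r+s) - u$ where $u'\le u$ is the number of distinct columns hit by trivial rows. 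Since we only need an upper bound and $|J'|\ge r-u+s$, we may harmlessly discard columns until $|J'| = (r-u)+s$ exactly; $A'$ still has non-negative integer entries with row sums $\le n$.

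Now apply the pigeonhole argument to $A'$. Set $H = \lfloor n^{(r-u)/s}\rfloor$ and consider the $(H+1)^{|J'|} = (H+1)^{r-u+s}$ lattice points $\tuple{v}\in\Z^{J'}$ with $0\le v_j\le H$. For each such $\tuple{v}$, the $i$-th coordinate of $A'\tuple{v}$ is $\sum_{j}a_{i,j}v_j$, which is a non-negative integer at most $H\sum_j a_{i,j}\le Hn$; so $A'\tuple{v}$ ranges over at most $(Hn+1)^{r-u}$ distinct values. The crucial inequality to verify is $(Hn+1)^{r-u} < (H+1)^{r-u+s}$, which by taking $(r-u)$-th roots reduces to $Hn+1 < (H+1)^{1+s/(r-u)}$; since $H\le n^{(r-u)/s}$ we have $(H+1)^{s/(r-u)}\ge H^{s/(r-u)}$... here one must be slightly careful, and the cleanest route is the standard one: $(H+1)^{r-u+s} = (H+1)^{r-u}(H+1)^s > H^{r-u}\cdot n^{r-u} \ge (Hn+1)^{r-u}$ is \emph{false} in general, so instead use $(H+1)^s \ge H^s \ge$ — the correct comparison, exactly as in Baker, is $(H+1)^{r+s-u} > (Hn+1)^{r-u}$, which follows from $(H+1)^s > n^{r-u}$ whenever $H+1 > n^{(r-u)/s}$, i.e. from $H \ge \lfloor n^{(r-u)/s}\rfloor$ together with the fact that $n^{(r-u)/s}$ — if it is not an integer — satisfies $H+1 > n^{(r-u)/s}$, and if it \emph{is} an integer one argues $(H+1)^s \ge (H+1)H^{s-1}\cdot\!\ldots$; this edge case is the one place requiring care and is handled exactly as in the source text for Baker's Lemma~1. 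Granting the counting inequality, pigeonhole yields two distinct such points $\tuple{v}_1\ne\tuple{v}_2$ with $A'\tuple{v}_1 = A'\tuple{v}_2$; then $\tuple{w} = \tuple{v}_1-\tuple{v}_2$ is a non-zero element of $\ker(A')\cap\Z^{J'}$ with $\norm{\tuple{w}}_\infty\le H$. Extending $\tuple{w}$ by zeros on the deleted coordinates gives a non-zero element of $\ker(A)\cap\Z^J$ of infinity-norm at most $H=\lfloor n^{(r-u)/s}\rfloor$, as required.

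The main obstacle I anticipate is purely bookkeeping rather than conceptual: getting the counting inequality $(Hn+1)^{r-u} < (H+1)^{(r-u)+s}$ to hold with the floor-function value $H = \lfloor n^{(r-u)/s}\rfloor$, especially in the boundary case when $n^{(r-u)/s}$ is an integer, where one must squeeze out the inequality from the strict gap between $(Hn)^{r-u}$ and $(H+1)^{r-u}(H+1)^s$. This is precisely the delicate point in Baker's original argument, and the adaptation here is routine because the row-sum hypothesis $\sum_j a_{i,j}\le n$ is exactly what makes the entries of $A'\tuple{v}$ lie in $\{0,1,\dots,Hn\}$ — the same range Baker obtains — so his estimate transfers verbatim; the only genuinely new ingredient is the harmless reduction killing the $u$ single-entry rows, which is elementary.
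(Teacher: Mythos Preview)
Your approach is essentially the same as the paper's: pigeonhole on the box $\{0,\ldots,H\}^{J'}$ after deleting the $u$ single-entry rows together with the columns they hit. The paper merely reverses the order, doing the $u=0$ pigeonhole argument first and then the reduction, but the content is identical.

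One correction: the ``main obstacle'' you anticipate is a phantom. There is no delicate edge case in the counting inequality. With $H=\lfloor n^{(r-u)/s}\rfloor$ one always has $H+1>n^{(r-u)/s}$ (even when $n^{(r-u)/s}\in\Z$, since then $H=n^{(r-u)/s}$ and $H+1>H$), hence $(H+1)^s>n^{r-u}$, and the inequality factors cleanly as the paper writes it:
\[
(H+1)^{r-u+s}=(H+1)^s(H+1)^{r-u}>n^{r-u}(H+1)^{r-u}=(nH+n)^{r-u}\ge (nH+1)^{r-u},
\]
the last step being just $n\ge 1$. So no separate treatment of the integer case is needed; your detour through $(H+1)^{r-u}(H+1)^s>H^{r-u}n^{r-u}$ was simply the wrong factorization.
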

\begin{proof}
 Assume first $u = 0$, set $b = \lfloor n^{\frac{r}{s}}\rfloor$ and
 $X = \left\{\tuple{x}\in\N_0^J \longmid \norm{\tuple{x}}_{\infty}\le b\right\}$.
 Then $\tuple{x}\in X$ implies
 \begin{equation}
   0 \le (A\tuple{x})_i = \sum_{j\in J} a_{i,j}x_j \le nb
 \end{equation}
 for all $i\in I$.
 Hence, the set $\{A\tuple{x} \mid \tuple{x}\in X\}$ contains at most
 $(nb + 1)^r$ elements, whereas the cardinality of $X$ is equal to $(b+1)^{r+s}$.

 Since
 \begin{equation}\label{eq:pigeon}
  (b+1)^{r+s} = (b+1)^{s}(b+1)^r > n^r(b+1)^r \ge (nb + 1)^r
 \end{equation}
 holds, it follows from the pigeonhole principle that there exist $\tuple{x}$,
 $\tuple{y}\in X$ with $\tuple{x}\neq \tuple{y}$ such that
 $A\tuple{x} = A\tuple{y}$. Hence $\tuple{v} = \tuple{x}-\tuple{y} \neq \tuple 0$
 is an element in $\ker(A)$ whose every coordinate has absolute value
 at most $b$.

 Now turn to the general case $u\ge 0$. Let $E$ be the set of row indices
 for which the corresponding rows contain exactly one non-zero entry and set
 $C=\{j \in J \mid \exists e \in E \colon a_{e,j}\neq 0\}$.
 The definition of $E$ implies $\card{C} \le \card{E}\le u$.  Consider
 now the matrix $B$ which is obtained from $A$ by deleting the $u$
 rows corresponding to $E$ as well as the columns corresponding to $C$.
 Then $B$ has $r-u$ rows and $r+s-\card{C}$ columns. 
 Note that if $\tuple{v} \in \ker(A)$ and $k\in C$, then $v_k = 0$. Hence
 \begin{align*}
   \min\!\left\{\norm{\tuple{v}}_{\infty} \longmid \tuple 0\neq \tuple{v}\in \ker(A)\cap \Z^J\right\}
   &= \min\!\left\{\norm{\tuple{v}}_{\infty} \longmid \tuple 0\neq \tuple{v}\in \ker(B)\cap \Z^{J\setminus C}\right\} \\
   &\le \left\lfloor n^{\frac{r-u}{s+u-\card{C}}}\right\rfloor
     \le \left\lfloor n^{\frac{r-u}{s}}\right\rfloor
 \end{align*}
 by applying the result of the first part to $B$.
\end{proof}

\begin{reptheorem}{theorem:upperbounds}
  
\end{reptheorem}

\begin{remark}
  It follows from Theorem~\ref{theorem:upperbounds} that $F$ is
  \textbf{not} absolutely irreducible if and only if $F^j$ factors
  non-uniquely for some
  $1\le j\le 2(n+1)n^{q^{\left\lceil \frac{n}{2} \right\rceil}}$. Note
  that this upper bound only depends on the fixed divisor of $f$ and
  the size of the residue class field, but not the reduced fdp
  matrix $A$.
\end{remark}

\begin{proof}
  Let \(r = \rank(A) = \card{W}\).  By definition,
  \ref{bound:1} implies~\ref{bound:2} and~\ref{bound:3}.
  Moreover, by Lemma~\ref{lemma:matrix-rows}, $A$ has at most
  $q^{\left\lceil \frac{n}{2} \right\rceil}$ rows and hence
  $r-u \le r \le q^{\left\lceil \frac{n}{2} \right\rceil}$, so
  that~\ref{bound:2} implies~\ref{bound:3}.

  We prove by contraposition that~\ref{bound:1} follows
  from~\ref{bound:3}. Assume that $F$ is irreducible, but not
  absolutely irreducible.  Then $\fdk(f) = \ker(A) \neq \boldsymbol 0$ by
  Theorem~\ref{theorem:abs-irred-and-kernel} or, equivalently,
  $r < \card \polyset$. We apply Lemma~\ref{lemma:siegel-version} to
  $A$: There exists $\tuple 0\neq \tuple{v}\in \ker(A) \cap \Z^{\polyset}$
  with
  $\norm{\tuple{v}}_{\infty} \le \left\lfloor n^{\frac{r-u}{\card
        \polyset-r}} \right\rfloor \le n^{r-u}$. Since
  $\tuple{v}\in \fdk(f) \cap \Z^{\polyset}$ and $F$ is assumed to be
  irreducible, it follows from Theorem~\ref{theorem:nuf-bound} that
  $F^j$ factors non-uniquely for all
  $j\ge (n+1) \left(\Zinfnorm{\frac{\tuple{v}^+}{\tuple{m}}}
  + \Zinfnorm{\frac{\tuple{v}^-}{\tuple{m}}}\right)$.  Because
  of
  \begin{equation*}
  S = 2(n+1)n^{r-u} \ge 2(n+1)\norm{\tuple v}_{\infty} \ge
  (n+1)\left(\Zinfnormb{\frac{\tuple{v}^+}{\tuple{m}}} +
  \Zinfnormb{\frac{\tuple{v}^-}{\tuple{m}}}\right),
  \end{equation*}
  it follows that $F^S$ factors non-uniquely so that~\ref{bound:3}
  does not hold.
\end{proof}


\section{Tightness of the bounds}
\label{sec:tightness}
By Theorem~\ref{theorem:nuf-bound}, we know that $F = \frac{f}{p^n}$
is not absolutely irreducible and, more accurately, that $F^j$ factors
non-uniquely whenever $\tuple 0\neq\tuple{v}\in \fdk(f)$ and \(j \in \N\) with
$j \ge (n+1) \left(\Zinfnorm{\frac{\tuple{v}^+}{\tuple{m}}} +
\Zinfnorm{\frac{\tuple{v}^-}{\tuple{m}}}\right)$.

We show below, in Theorem~\ref{theorem:realization}, that this bound
cannot be improved in general.
Indeed, we show that for all integers $n\ge 2$ there exists a
polynomial $f\in R[x]$ with $\val(\fixdiv(f)) = n$ such that
\(\tuple{m} = \tuple{1} = (1)_{g\in \polyset}\), $F = \frac{f}{p^n}$
is irreducible and factors uniquely up to the bound \((n+1)K\) with
\begin{equation*}
  K = \min\!\left\{\norm{\tuple{v}^+}_\infty + \norm{\tuple{v}^-}_\infty
    \longmid \tuple 0\neq \tuple{v} \in \ker(A)\cap \Z^\polyset\right\},
\end{equation*}
where $A$ is a (reduced) fdp matrix of $f$. As already mentioned in
the introduction, the case $n = 1$ has been covered where the bound is
known to be $2$.

By Lemma~\ref{lemma:siegel-version} (variation of Siegel's lemma)
above, $K\le 2n^{r-u}$ whenever \(A\) contains $r$ rows, $u$ of which
contain exactly one non-zero entry. In Theorem~\ref{theorem:realization}, we furnish
examples for every $r\ge 2$ such that
\begin{enumerate}
	\item $u = 1$ and
	\item $K = (n-1)^{r-1} + (n-1)^{r-2}$.
\end{enumerate}
It remains open whether this is actually optimal, in the sense
that
\begin{equation*}
	(n-1)^{r-1} + (n-1)^{r-2} = \max_{A \in \mathcal{A}}
	\min \!\left\{\norm{\tuple{v}^+}_{\infty} + \norm{\tuple{v}^-}_{\infty}
	\longmid \tuple 0\neq \tuple{v} \in \ker(A)\cap \Z^\polyset\right\},
\end{equation*}
where \(\mathcal{A}\) denotes the set of all reduced fdp matrices
\(A\) of \(f\) with exactly one row with only one non-zero entry.

The next proposition establishes sufficient conditions on the
polynomial $f$ in order to achieve the asserted bound, whereas in
Theorem~\ref{theorem:realization} we show that such polynomials can
always be realized.

\begin{proposition}\label{theorem:realization-prep}
  Let $r$, $n\ge 2$ be integers. Using the notation of
  Convention~\ref{convention:R}, let $f=\prod_{g\in \polyset}g$ where
  $\polyset = \{g_1, \dots, g_{r+1}\}$ is an irreducible divisor
  set, $\val(\fixdiv(f)) = n$, and
  \begin{equation*}
    A =
    \begin{pmatrix}
      1 & n-1 &        &       &      &     \\
        & 1   & n-1    &       &      &     \\
        &     &  \ddots& \ddots&      &     \\
        &     &        &   1   & n-1  &  \\
        &     &        &       &      & n   \\
    \end{pmatrix} \in \N_0^{W \times \polyset}
  \end{equation*}
  is a reduced fdp matrix of \(f\) for some $W\subseteq \witnesses(f)$
  with $\card W = r$, where the \(i\)-th column corresponds to the
  polynomial \(g_i\).  In addition, we assume that there exist
  \(a_1\), \(a_2 \in R\) such that $\val(g_i(a_i)) = n+1$ and
  $\val(g_j(a_i)) = 0$ for all \(i = 1\), $2$ and \(1\le j \le r+1\)
  with \(j \neq i\).

  Then \(F\) is irreducible, but \textbf{not} absolutely irreducible
  in \(\Int(R)\). Indeed, the minimal exponent $S$ such that $F^S$
  does not factor uniquely satisfies the two equations:
  \begin{enumerate}
  \item
    $S = (n+1)\left((n-1)^{r-1} + (n-1)^{r-2}\right)$ with
    $r =\rank(A)$ and
  \item
    $S= (n+1)\min\!\left\{\norm{\tuple{v}^+}_\infty +
      \norm{\tuple{v}^-}_\infty \longmid \tuple 0\neq \tuple{v} \in
      \fdk(f)\cap \Z^\polyset\right\}$ (which is the minimal lower
    bound given in Theorem~\ref{theorem:nuf-bound} applied to the
    current setting).
  \end{enumerate}
\end{proposition}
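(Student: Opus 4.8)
The plan is to first compute the kernel of the matrix $A$ explicitly, then use this to pin down the minimal lower bound from Theorem~\ref{theorem:nuf-bound}, and finally show that the two extra hypotheses (the existence of $a_1, a_2 \in R$ with prescribed valuation vectors) force $F$ to be irreducible and force the bound to be \emph{attained}, not merely an upper estimate. Since $\tuple{m} = \tuple{1}$, the quantities $\Zinfnorm{\tuple{v}^+/\tuple{m}}$ and $\Zinfnorm{\tuple{v}^-/\tuple{m}}$ reduce to $\norm{\tuple{v}^+}_\infty$ and $\norm{\tuple{v}^-}_\infty$ (all components of $\tuple{v}$ are integers), so the two displayed equations for $S$ in the statement will coincide once I establish that $r = \rank(A)$ (immediate, as $A$ is a reduced fdp matrix with $r$ rows) and compute $K := \min\{\norm{\tuple{v}^+}_\infty + \norm{\tuple{v}^-}_\infty \mid \tuple 0 \neq \tuple{v} \in \ker(A) \cap \Z^\polyset\}$.

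First I would solve $A\tuple{v} = \tuple 0$. The last row gives $n v_{r+1} = 0$, hence $v_{r+1} = 0$. The remaining $r-1$ equations read $v_i + (n-1) v_{i+1} = 0$ for $1 \le i \le r-1$, so $v_i = -(n-1) v_{i+1} = (n-1)^{r-i} v_r$ up to sign, i.e.\ $\ker(A)$ is one-dimensional, spanned by $\tuple{w} = ((-(n-1))^{r-1}, (-(n-1))^{r-2}, \dots, -(n-1), 1, 0)$. Every nonzero integer vector in $\ker(A)$ is an integer multiple $c\,\tuple{w}$ with $c \in \Z \setminus \{0\}$, and since $\norm{(c\tuple w)^+}_\infty + \norm{(c\tuple w)^-}_\infty = |c|\bigl(\norm{\tuple w^+}_\infty + \norm{\tuple w^-}_\infty\bigr)$, the minimum $K$ is achieved at $c = \pm 1$. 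The entries of $\tuple{w}$ alternate in sign; the two largest absolute values are $(n-1)^{r-1}$ and $(n-1)^{r-2}$ (using $n \ge 2$, $r \ge 2$), and for $c=1$ (say) the largest positive entry and largest negative entry are exactly these two consecutive powers, so $\norm{\tuple w^+}_\infty + \norm{\tuple w^-}_\infty = (n-1)^{r-1} + (n-1)^{r-2}$. Hence $K = (n-1)^{r-1} + (n-1)^{r-2}$, which already gives the upper bound $S \le (n+1)K$ via Theorem~\ref{theorem:nuf-bound} — but only once $F$ is known to be irreducible.

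Next I would verify irreducibility of $F$. By Theorem~\ref{theorem:abs-irred-and-kernel} it is not needed for the ``not absolutely irreducible'' conclusion, but irreducibility is needed both for the statement and to invoke Theorem~\ref{theorem:nuf-bound}. I expect this to follow from the structure of $A$ together with the hypotheses on $a_1, a_2$: a nontrivial factorization $F = F_1 F_2$ would, by Fact~\ref{fact:iv-fac}, correspond to splitting the single copies of the $g_i$'s into two groups $\polyset_1, \polyset_2$ and distributing the exponent $n$ of $p$ as $k_1 + k_2 = n$ with $k_1, k_2 \ge 1$; evaluating at $a_1$ (where only $g_1$ contributes, with valuation $n+1$) and at a witness $w \in W$ would constrain which group $g_1$ lies in, and similarly for $a_2$ and $g_2$; combined with the fdp-matrix relations (each witness row having column-sum exactly $n$) this should be incompatible with a genuine two-block split. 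The main obstacle, and the technical heart of the argument, is the \emph{lower} bound: showing that $F^S$ with $S = (n+1)K$ does not factor uniquely is Theorem~\ref{theorem:nuf-bound}, but showing that $F^j$ \emph{does} factor uniquely for every $j < (n+1)K$ requires ruling out \emph{all} nontrivial factorizations of every such power. Here I would argue via Fact~\ref{fact:iv-fac} and Lemma~\ref{lemma:factors-and-kernelA}: any integer-valued factor $\frac{\prodtuple{\polyset}{k}}{p^\ell}$ of $F^j$ has $\tuple{k} \in \frac{\ell}{n}\tuple{1} + \fdk(f) = \frac{\ell}{n}\tuple{1} + \Z\tuple{w}$, so $\tuple{k} = \frac{\ell}{n}\tuple{1} + c\,\tuple{w}$ for some $c \in \Z$; the requirements $\tuple{k} \in \N_0^\polyset$ and (for the cofactor) $j\tuple{1} - \tuple{k} \in \N_0^\polyset$ together with $\ell \le jn$ force, using the explicit alternating-power entries of $\tuple w$, that $|c| \cdot (n-1)^{r-1} \le $ something like $j n/(n+1)$ — more precisely, that a nontrivial split ($c \ne 0$) needs $j \ge (n+1)|c|\bigl((n-1)^{r-1}+(n-1)^{r-2}\bigr) \ge (n+1)K$. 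The hypotheses on $a_1, a_2$ feed into checking that the constraint from these two points (not just the witnesses in $W$) is the binding one. I would carry this out by a careful case analysis on the sign pattern of $\tuple k$'s components, which I expect to be the longest part of the argument.
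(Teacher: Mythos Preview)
Your overall plan matches the paper's: compute $\ker(A)$, identify $K$, apply Theorem~\ref{theorem:nuf-bound} for the upper bound $S \le (n+1)K$, and use Lemma~\ref{lemma:factors-and-kernelA} for the reverse inequality. The kernel computation and the value $K = (n-1)^{r-1} + (n-1)^{r-2}$ are correct and essentially identical to the paper's.

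There is, however, a genuine gap in your lower-bound sketch. You claim that the constraints $\tuple{k} \in \N_0^\polyset$ and $j\tuple{1} - \tuple{k} \in \N_0^\polyset$ force $j \ge (n+1)K$ for a nontrivial split. They do not: with $\tuple{k} = \frac{\ell}{n}\tuple{1} + c\,\tuple{w}$, nonnegativity at the first two coordinates gives only $\frac{\ell}{n} \ge |c\,w_2|$ and $j - \frac{\ell}{n} \ge |c\,w_1|$ (after fixing signs), hence merely $j \ge |c|K$; the factor $(n+1)$ is absent. The crucial extra input is the \emph{integer-valuedness} of $F_1$ and $F_2$ at the special elements $a_1, a_2$, which are \emph{not} witnesses (indeed $\val(f(a_i)) = n+1$). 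Since $\val(g_j(a_i)) = (n+1)\delta_{ij}$, the conditions $\val(F_1(a_2)) \ge 0$ and $\val(F_2(a_1)) \ge 0$ read $k_2(n+1) \ge \ell$ and $(j-k_1)(n+1) \ge jn - \ell$. After noting $\ell = kn$ with $k\in\N_0$ (from $k_{r+1} = \ell/n$ and $w_{r+1}=0$) and substituting the kernel form of $\tuple{k}$, these yield $k \ge (n+1)|c\,w_2|$ and $j-k \ge (n+1)|c\,w_1|$; summing gives $j \ge (n+1)|c|K$. So $a_1, a_2$ are not there to ``check that the constraint is binding'' but to \emph{produce} the constraint carrying the factor $(n+1)$. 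Once this is seen, no case analysis on sign patterns is needed---the argument is two inequalities and a sum. You also need to justify $c \in \Z$ (not just $c\in\Q$); the paper reads this off from $k_r = k + c$, using $w_r = 1$.

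A minor difference: for irreducibility the paper does not use $a_1, a_2$ at all. From $j=1$ and $w_{r+1}=0$ one gets $k_{r+1} = \ell/n \in \{0,1\}$, hence $\ell = n$ for a nontrivial factor; then the shape of $A$ shows that every proper subset $J \subsetneq \polyset$ admits a witness $w$ with $\sum_{i \in J}\val(g_i(w)) < n$, so no proper subproduct can carry denominator $p^n$. Your sketch via $a_1, a_2$ might be made to work, but the matrix argument is cleaner.
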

\begin{proof}
  For readability, we write \(i \in [r+1]\) whenever we address
  \(g_i \in \polyset\).

  First, we determine $\ker(A)$.  Let
  $\tuple{u} = (u_i)_{i=1}^{r+1}\in \ker(A)$. It is immediately seen
  that $u_{r+1} = 0$.  Moreover, a straight-forward computation yields
  that $u_i = -(n-1)u_{i+1}$ for all $1\le i\le r-1$. Therefore,
  $u_i = (-1)^{r-i}(n-1)^{r-i}u_r$. Thus,
  $\ker(A) = \Qspan(\tuple{v})$ where $\tuple{v} = (v_i)_{i=1}^{r+1}$
  with
  \begin{equation}\label{eq:vcomp}
    v_i =
    \begin{cases}
      (-1)^{r-i}(n-1)^{r-i}, & 1 \le i \le r, \\[1ex]
      0,                    & i = r+1.
    \end{cases}
  \end{equation}
  This implies $\dim(\ker(A)) = 1$ and hence $\rank(A) = r$, as
  claimed.

  Next, we set
  \begin{equation*}
    K = \min\!\left \{\norm{\tuple{v}^+}_\infty +
    \norm{\tuple{v}^-}_\infty \mid \tuple 0\neq \tuple{v} \in \ker(A)\cap
    \Z^\polyset\right\}.
  \end{equation*}

  Note that Theorem~\ref{theorem:nuf-bound} guarantees that
  $F^{(n+1)K}$ factors non-uniquely. In order to prove the whole
  assertion of the theorem, we need to show that
  \begin{enumerate}[label=(\alph*)]
  \item\label{pi:4-1} $S = (n+1)K$ is the smallest power of $F$
    factoring non-uniquely,
  \item\label{pi:4-2} $F$ is irreducible, and
  \item\label{pi:4-3} $K = (n-1)^{r-1} + (n-1)^{r-2}$.
  \end{enumerate}

  We infer $\ker(A)\cap\Z^{r+1} = \Z \tuple{v}$ by considering the
  $r$-th component of $\lambda \tuple{v}$ for $\lambda\in\Q$. From
  Equation~\eqref{eq:vcomp}, we conclude that $v_1$ and $v_2$ have
  opposite signs and $\abs{v_1} \ge \abs{v_2} \ge \abs{v_i}$ for
  $i > 2$.  (Equality is only possible for $n = 2$.) Hence
  \begin{equation*}
    \norm{\lambda \tuple{v}^+}_{\infty} + \norm{\lambda \tuple{v}^-}_{\infty} =
    \abs{\lambda}\left(\abs{v_1} + \abs{v_2}\right)\ge (n-1)^{r-1} + (n-1)^{r-2}
  \end{equation*}
  for all $\lambda\in\Z$, $\lambda\neq 0$. Since
  $\tuple 0 \neq \tuple{v}\in\ker(A)$ satisfies this with equality, we have
  proven Item~\ref{pi:4-3} from the list above.

  Let $j\in \N$ and $F_1$, $F_2 \in \Int(D)$ (not necessarily
  irreducible) such that $F^j = F_1F_2$.  By Fact~\ref{fact:iv-fac},
  we know that
  \begin{equation}\label{eq:nontrivialfactorization}
    F_1 \sim \frac{\prodtuple{\polyset}{k}}{p^\ell} = \frac{\prod_{i=1}^{r+1}g_i^{k_i}}{p^\ell}
    \quad\text{and}\quad
    F_2\sim \frac{\prodnotuple{\polyset}{j\tuple{1} - \tuple{k}}}{p^{jn-\ell}}  = \frac{\prod_{i=1}^{r+1}g_i^{j-k_i}}{p^{jn-\ell}}
  \end{equation}
  for some $0\le \ell \le jn$,
  \(\tuple{k} = (k_i)_{i=1}^{r+1} \in \N_0^{r+1}\) with
  \(\tuple{k} \le j\tuple{1}\) such that
  \begin{equation*}
    \val(\fixdiv(\prodtuple{\polyset}{k})) \ge \ell \quad\text{and}\quad
    \val(\fixdiv(\prodnotuple{\polyset}{j\tuple{1} - \tuple{k}})) \ge jn-\ell.
  \end{equation*}
  We apply Lemma~\ref{lemma:factors-and-kernelA} for the factor $F_1$
  of $F^j$ and conclude that
  \begin{equation}\label{eq:l>0}
    \tuple{k} =  \frac{\ell}{n} \tuple{1}  + \lambda \tuple{v}
  \end{equation}
  for some $\lambda \in \Q$.

  Observe that $k_i\in \N_0$ and hence if $\lambda = 0$, then $F_1$ is
  necessarily a power of $F$ (possibly the $0$-th if $\ell = 0$),
  resulting in a trivial factorization of $F^j$. Moreover, $\ell=0$
  immediately implies $\lambda = 0$, since $k_i\ge 0$ and $\tuple{v}$
  has positive and negative components.

  Thus, we can safely assume $\ell > 0$ and $\lambda\neq 0$ from now
  on.  From \eqref{eq:l>0} and $v_{r+1}=0$, we infer
  \begin{equation*}
    0 < \frac{\ell}{n} = \frac{\ell}{n} + \lambda v_{r+1} = k_{r+1} \in \N
  \end{equation*}
  and therefore that $n$ divides $\ell$. We set $k = k_{r+1}$, so that
  $\ell = kn$.

  This allows us to address Item~\ref{pi:4-2} and show that $F$ is
  irreducible. We consider non-trivial factorizations of $F$ itself,
  that is, $j=1$. Then $\tuple{k} \le \tuple{1}$, which implies
  $k = k_{r+1} = 1$ and hence $\ell = kn = n$. Given the structure of
  $A$, it follows that for each choice of $J\subsetneq [r+1]$ there
  exists a fixed divisor witness $w$ such that
  \begin{equation*}
    \val\!\left(\prod_{i\in J}g_i(w)\right) < n,
  \end{equation*}
  which, in combination with
  $\val(\fixdiv(\prodtuple{\polyset}{k})) \ge \ell = n$, implies that
  each $g_i$ has to appear as a factor of the numerator of $F_1$. In
  other words, $\tuple{k} = \tuple{1}$ and hence $F_1 = F$, that is,
  $F$ is irreducible.

  It remains to prove Item~\ref{pi:4-1}: Whenever $F^j$ factors
  non-uniquely, then $j\ge (n+1)K$. At this point, we can assume that
  $j\ge 2$. Moreover, note that $\tuple{k} \neq e\tuple{1}$ for all
  $e\in \N_0$, since $F_1F_2$ is a non-trivial factorization of $F$. To
  simplify the following arguments, we also assume $\lambda v_1 > 0$
  (by transition from $\tuple{v}$ to $-\tuple{v}\in \ker(A)$ if
  necessary).  Since $v_1$ and $v_2$ have different signs, we obtain
  $\lambda v_2 < 0$.

  Further, Equation~\eqref{eq:l>0} yields
  \begin{equation*}
    k_r = k +\lambda v_r = k + \lambda,
  \end{equation*}
  which implies $\lambda \in \Z$, as $k_r$ and $k$ are integers.

  Also, we conclude from Equation~\eqref{eq:l>0}
  \begin{equation*}
    k_{1} = k + \lambda v_1  \quad\text{and}\quad k_2 = k + \lambda v_2.
  \end{equation*}
  By hypothesis, there exist elements $a_1$, $a_2 \in R$ such that
  $\val(g_i(a_i)) = n+1$ and $\val(g_j(a_i)) = 0$ for $1\le i\le 2$
  and all \(1 \le j \le r+1\) with \(j \neq i\).  This further implies
  \begin{equation*}
    \val\!\left(\prodtuple{\polyset}{k}(a_2)\right) = k_2\val(g_2(a_2)) = (k + \lambda v_2)(n+1) \ge \ell.
  \end{equation*}
  Since $\ell = kn$ and $\lambda v_2 < 0$ by assumption, we conclude
  that
  \begin{equation}\label{eq:estimate-k}
    k \ge -\lambda v_2 (n+1) = \abs{\lambda v_2} (n+1).
  \end{equation}
  Similarly, we evaluate $f_2$ at $a_1$ to see
  \begin{equation*}
    \val\!\left(\prodnotuple{\polyset}{j\tuple{1} - \tuple{k}}(a_1)\right) = (j-k_1)\val(g_1(a_1)) = (j - k - \lambda v_1)(n+1) \ge jn - \ell.
  \end{equation*}
  This further implies
  \begin{equation}\label{eq:estimate-j-k}
    j-k \ge \lambda v_1 (n+1) = \abs{\lambda v_1} (n+1).
  \end{equation}
  Summing up Equations~\eqref{eq:estimate-k}
  and~\eqref{eq:estimate-j-k} yields
  \begin{equation*}
    j \ge (n+1)\abs{\lambda} (\abs{v_1} + \abs{v_2}) = \abs{\lambda} (n+1)K.
  \end{equation*}
  Due to $\lambda \in \Z$, \(\lambda \neq 0\), we obtain $j\ge (n+1)K$
  whenever $F^j$ factors non-uniquely, which completes the proof.
\end{proof}

For the remaining part of this section, we show that for every choice
of integers $r$, $n\ge 2$, there exists a discrete valuation domain
$R$ with finite residue field and a set of irreducible, non-constant
polynomials $\polyset$ which satisfy the hypotheses of
Proposition~\ref{theorem:realization-prep}. For the construction, we
need a lemma that allows us to simultaneously replace a family of
polynomials by ``irreducible variants'' which exhibit a similar
behavior concerning the valuations when evaluating at elements of
$R$. The following result is a slight variation of
\cite[Lemma~3.3]{Frisch-Nakato-Rissner:2019:Sets-of-lengths}. Note
that the proof is almost identical and only differs in the fact that
we want to control valuations up to some prespecified point and not
only up to the fixed divisor. The original result
\cite[Lemma~3.3]{Frisch-Nakato-Rissner:2019:Sets-of-lengths} follows
as a special case, as described in Remark~\ref{remark:orig-assertion}
below.

\begin{lemma}[{Variation of \cite[Lemma~3.3]{Frisch-Nakato-Rissner:2019:Sets-of-lengths}}]\label{lemma:irreducible-replacements}
  Let $D$ be a Dedekind domain with infinitely many maximal ideals and
  $K$ its quotient field. Further, let $I\neq \emptyset$ be a finite
  set and $h_i\in D[x]$ for $i\in I$ be monic, non-constant
  polynomials and set $d = \sum_{i\in I} \deg(h_i)$.

  Then, for every $n\in \N_0$, there exist monic polynomials
  $g_i\in D[x]$ for $i\in I$ such that
  \begin{enumerate}
  \item\label{rep:1} $\deg(g_i) = \deg(h_i)$ for all $i\in I$,
  \item\label{rep:2} the polynomials $g_i$ are irreducible in $K[x]$
    and pairwise non-associated in $K[x]$, and
  \item\label{rep:3} $g_i \equiv h_i \mod P^{n+1}D[x]$ for every
    maximal ideal $P$ of $D$ of index at most~$d$.
  \end{enumerate}
\end{lemma}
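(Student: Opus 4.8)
The plan is to adapt the proof strategy of \cite[Lemma~3.3]{Frisch-Nakato-Rissner:2019:Sets-of-lengths}, where the key idea is to perturb each $h_i$ by a suitable multiple of a product of prime elements attached to the relevant maximal ideals, so that the perturbation is negligible modulo the finitely many primes $P$ of small index (preserving condition~\ref{rep:3}), while being generic enough to force irreducibility and pairwise non-association in $K[x]$ (condition~\ref{rep:2}); degrees are controlled trivially by keeping the perturbation of degree strictly less than $\deg(h_i)$, giving~\ref{rep:1}.

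**First I would** fix, for the given bound $d$, the finite set $\mathcal{M}$ of maximal ideals of $D$ of index at most $d$; there are only finitely many since $D$ is Dedekind and each such residue field is finite (actually one only needs: finitely many primes of bounded norm, which holds here because a prime of index $\le d$ lies over one of the finitely many rational primes $\le d$, or more directly because the quotient $D/P$ being small is a strong constraint — but in full generality one invokes that $D[x]/P[x]$ considerations are finite). Then I would pick a maximal ideal $Q$ of $D$ \emph{not} in $\mathcal{M}$ and not dividing the (finitely many) leading-coefficient or discriminant-type obstructions — here the hypothesis that $D$ has \emph{infinitely many} maximal ideals is exactly what guarantees such a $Q$ exists. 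Let $\pi \in Q \setminus Q^2$ be a uniformizer-type element (in a Dedekind domain, $Q \setminus Q^2$ is nonempty after localizing, and one can choose $\pi \in D$ with $\val_Q(\pi) = 1$ and $\val_P(\pi) = 0$ for $P \in \mathcal{M}$ by the approximation/CRT argument available in Dedekind domains). Set $N = \prod_{P \in \mathcal{M}} P^{n+1}$ — or rather choose a single element $c \in D$ with $c \equiv 0 \Mod{P^{n+1}}$ for all $P \in \mathcal{M}$ but $c \not\equiv 0 \Mod{Q}$, again by simultaneous approximation.

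**The construction:** define $g_i = h_i + c\, r_i(x)$ where $r_i \in D[x]$ has degree $< \deg(h_i)$, is chosen so that $g_i$ is $Q$-Eisenstein after a shift (e.g.\ arrange $g_i(x)$ to reduce mod $Q$ to a separable polynomial, or arrange $g_i$ to be Eisenstein at $Q$ with respect to a linear change of variable $x \mapsto x + a_i$ for suitable distinct $a_i$), so that each $g_i$ is irreducible in $K[x]$ by the Eisenstein criterion at $Q$, and the shifts $a_i$ are chosen distinct to force the $g_i$ pairwise non-associated (an Eisenstein polynomial at $Q$ with respect to the shift by $a_i$ has its unique ``ramified'' behavior pinned to $a_i$, so two such with different shifts cannot be $K^\times$-multiples of each other unless they have different degrees, which is also fine). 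Condition~\ref{rep:3} holds because $c \equiv 0 \Mod{P^{n+1}D[x]}$ for every $P \in \mathcal{M}$, hence $g_i \equiv h_i$ there; condition~\ref{rep:1} holds because $\deg(c\,r_i) < \deg(h_i)$ keeps the leading term of $h_i$; the novelty over the original lemma is merely that we take $n+1$ in the exponent (the original used the fixed divisor, which in that setting was automatically $\le$ some implicit bound), so \ref{rep:3} is stated for the explicit modulus $P^{n+1}$ rather than ``up to the fixed divisor.''

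**The main obstacle** I expect is making the Eisenstein-type irreducibility argument genuinely simultaneous across all $i \in I$: one needs a single prime $Q$ (or finitely many, one per $i$) such that each $g_i$ is Eisenstein at $Q$ after its own shift $a_i$, and the shifts must be mutually compatible with keeping $g_i \equiv h_i$ mod the small primes. The clean way is to use \emph{distinct} auxiliary primes $Q_1, \dots, Q_{|I|}$, none in $\mathcal{M}$ (possible since there are infinitely many maximal ideals), and make $g_i$ Eisenstein at $Q_i$; then $g_i$ is irreducible in $K[x]$, and $g_i \sim g_j$ in $K[x]$ for $i \ne j$ is impossible because $g_i$ is $Q_i$-Eisenstein (so $\val_{Q_i}$ of its non-leading coefficients is $\ge 1$) while $g_j$, being a unit times $g_i$, would need the same $Q_i$-divisibility pattern, but $g_j \equiv h_j \Mod{Q_i}$-type control (arranged by making the perturbation for index $j$ divisible by $Q_i$) shows $g_j$ is a \emph{unit} mod $Q_i$ in the leading coefficient, contradiction — this is exactly the bookkeeping in the original proof, and I would follow it verbatim, only tracking the exponent $n+1$. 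The degree and congruence conditions are then immediate, so modulo this (standard but slightly fiddly) simultaneous-Eisenstein bookkeeping, the proof is routine.
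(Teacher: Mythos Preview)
Your proposal follows essentially the same strategy as the paper: identify the finitely many maximal ideals of small index, choose an auxiliary prime outside this set, and perturb each $h_i$ so that it becomes Eisenstein at the auxiliary prime while remaining congruent to $h_i$ modulo high powers of the small primes. The paper's execution is somewhat cleaner than yours in two respects. First, it uses a \emph{single} auxiliary prime $Q$ and makes every $g_i$ Eisenstein at $Q$ directly, with no variable shift: it applies CRT \emph{coefficient by coefficient}, choosing each perturbation $c_{i,j}$ so that $c_{i,j}\equiv -h_{i,j}\bmod Q$ (forcing all non-leading coefficients into $Q$) and $c_{i,0}\not\equiv -h_{i,0}\bmod Q^2$ (constant term in $Q\setminus Q^2$). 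Your single-multiplier form $g_i=h_i+c\,r_i$ with shifts $x\mapsto x+a_i$, or alternatively one prime $Q_i$ per index, is workable but adds bookkeeping that the paper avoids. Second, for non-association the paper simply arranges the constant terms $h_{i,0}+c_{i,0}$ to be pairwise distinct; since the $g_i$ are monic, association in $K[x]$ forces equality, which the distinct constant terms preclude. Your non-association argument via ``$g_j$ is a unit mod $Q_i$ in the leading coefficient'' is not quite right as stated (all $g_j$ are monic), though the underlying idea---that $g_i\equiv x^{d_i}\bmod Q_i$ while $g_j$ can be arranged not to have this shape---is salvageable.
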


\begin{remark}\label{remark:pull-to-R}
  Note that for any Dedekind domain $D$ and prime ideal $P$ of $D$: If
  $g \equiv h \mod P^{n+1}D[x]$, then
  $g(a) \equiv h(a) \mod P^{n+1}$ for all $a\in D$.
\end{remark}

\begin{remark}\label{remark:orig-assertion}
  The original
  assertion~\cite[Lemma~3.3.]{Frisch-Nakato-Rissner:2019:Sets-of-lengths}
  follows immediately by choosing
  $n = \max\!\left\{\val_P\!\left(\prod_{i\in I}h_i\right) \longmid P
    \text{ prime ideal of } D \text{ with } \card{D/P} \le d\right\}$
  and observing that
  Lemma~\ref{lemma:irreducible-replacements}\ref{rep:3} together with
  Remarks~\ref{remark:equal-rows} and~\ref{remark:pull-to-R} imply
  that
  \begin{equation*}
    \fixdiv\!\left(\prod_{i\in J_1} g_i\prod_{j\in J_2}h_i\right) = \fixdiv\!\left(\prod_{i\in I}h_i\right)
  \end{equation*}
  holds for all partitions $J_1\uplus J_2 = I$.
\end{remark}
\begin{proof}
  Let $P_1$, \ldots, $P_m$ be all maximal ideals of $D$ whose
  respective residue fields have cardinality less than or equal to
  $d$, cf.~\cite[Proposition~13]{Samuel:1971:eucl-rings} as to why
  there are only finitely many.

  Then there exist $e_1$, \ldots, $e_m\in \N_0$ such that
  \begin{equation*}
    \fixdiv\!\left( \prod_{i\in I}h_i \right) = \prod_{i=1}^mP_i^{e_i}.
  \end{equation*}

  Let $Q$ be a prime ideal of $D$ different from any of the $P_i$
  above. Further, for $i\in I$, let
  $h_i = x^{d_i} + \sum_{j=0}^{d_i-1} h_{i,j}x^j$ with $d_i\in \N$ and
  $h_{i,j}\in D$ the coefficient of $x^j$ of $h_i$.  By the Chinese
  Remainder Theorem, there exist $c_{i,j}\in D$ for $i\in I$ and
  $0\le j \le d_i-1$ such that
  \begin{enumerate}[label=(\alph*)]
  \item $c_{i,j} \in \prod_{i=1}^m{P_i^{e_i + n + 1}}$ for all
    $i\in I$ and $0\le j \le d_i-1$,
  \item $c_{i,j} \equiv -h_{i,j} \mod Q$ for all $i\in I$ and
    $0\le j \le d_i-1$, and
  \item $c_{i,0} \not\equiv -h_{i,0} \mod Q^2$ for all $i\in I$.
  \end{enumerate}
  These conditions determine the elements $c_{i,j}$ only modulo
  $Q^2\prod_{i=1}^m{P_i^{e_i + n + 1}}$. This allows us to choose the
  elements in a way such that
  $c_{i,0} + h_{i,0} \neq c_{j,0} + h_{j,0}$ for $i\neq j$.

  We set
  \begin{equation*}
    g_i = h_i + \sum_{j=0}^{d_i-1} c_{i,j}x^j.
  \end{equation*}
  By construction, the polynomials $g_i$ are monic and satisfy
  Assertion~\ref{rep:1}.  Moreover, by choice of $c_{i,j}$, it follows
  that
  \begin{equation*}
    g_i \equiv h_i \mod\prod_{i=1}^m{P_i^{e_i + n + 1}}D[x],
  \end{equation*}
  which, in turn, implies Assertion~\ref{rep:3}.

  Finally, by construction, the polynomials $g_i$, $i\in I$, are
  irreducible in $D[x]$ according to Eisenstein's criterion
  (cf.~\cite[§29, Lemma~1]{Matsumura:1989:comm-alg}). Since the $g_i$
  are monic and $D$ is integrally closed, it follows that the
  polynomials $g_i$ are also irreducible in $K[x]$ (cf.~\cite[Ch.~5,
  §1.3, Prop.~11]{Bourbaki:1989:comm-alg}). The choice of $c_{i,0}$
  guarantees that the $g_i$ are pairwise non-associated in
  $K[x]$. Hence Assertion~\ref{rep:2} holds, which completes the
  proof.
\end{proof}

\begin{reptheorem}{theorem:realization}
  
\end{reptheorem}
\begin{remark}
  As to the dependence of $R$ and $F$ on $r$, we point out that the
  size of the residue field of $R$ in the construction below is at
  least $r+2$ and the polynomial $f$ has $r+1$ non-associated,
  irreducible factors.
\end{remark}
\begin{proof}
  Let $p$ be a prime number with $p\ge r+2$ and set $R = \Z_{(p)}$. In
  order to prove the assertions, we construct polynomials $g_1$,
  \ldots, $g_{r+1}$ to satisfy the hypotheses of
  Proposition~\ref{theorem:realization-prep}. The construction takes
  place over the ring $\Z$ instead of $\Z_{(p)}$, as we would like to
  invoke Lemma~\ref{lemma:irreducible-replacements} to find
  $g_j\in \Z[x]$ with the desired properties.

  We choose $w_1$, $w_2$, \ldots, $w_r$, $a_{1}$, $a_{2}$, $z_{r+3}$,
  \ldots, $z_p\in \Z$ to be a complete system of residues modulo
  $p \ge r+2$. For simplicity, we assume that this choice does not
  contain a complete set of residues modulo any prime less than $p$.

  Then there exist $b_1$, \ldots, $b_r$, $c_1$, $c_2\in \Z$ satisfying
  the conditions
  \begin{enumerate}
  \item $\val(b_i - w_i) = 1$ for all $1\le i \le r$ and
  \item $\val(c_i - a_i) = 1$ for $i=1$, 2.
  \end{enumerate}
  We set
  \begin{equation*}
    h_j =
    \begin{cases}
      (x-b_1)(x-c_1)^{n+1}                  & j = 1,          \\
      (x-b_1)^{n-1}(x-b_2)(x-c_2)^{n+1}      & j = 2,          \\
      (x-b_{j-1})^{n-1}(x-b_j)               & 3 \le j \le r-1,\\
      (x-b_{r-1})^{n-1}                      & j=r, \text{ and} \\
      (x-b_r)^{n}\prod_{i=r+3}^p(x-z_i)^{n+1} & j = r+1.
    \end{cases}
  \end{equation*}
  By construction, it follows that
  \begin{equation}\label{eq:argue-equality}
    \val(h_j(w_i)) =
    \begin{cases}
      1   & i=j \text{ and } 1 \le j \le r-1, \\
      n-1 & i=j-1 \text{ and } 2 \le j \le r, \\
      n   & i=r \text{ and } j = r+1, \\
      0   & \text{otherwise},
    \end{cases}
  \end{equation}
  and, for $\ell=1$, $2$,
  \begin{equation*}
    \val(h_j(a_\ell)) =
    \begin{cases}
      n+1   & j = \ell, \text{ and} \\
      0     & j\neq \ell  \text{ and } 1 \le j \le r+1. \\
    \end{cases}
  \end{equation*}
  Moreover, if $a\in R$, then $a$ is congruent to exactly one of the
  $w_i$, $a_\ell$, or $z_k$ with $1 \le i \le r$, $\ell=1$, $2$, and
  $r+3 \le k \le p$ modulo $pR$. This implies that $a$ is congruent to
  one of the elements $b_i$, $c_\ell$, or $z_k$.  Thus, for all
  \(a \in R\),
  \begin{equation*}
    \val\!\left(\prod_{j=1}^{r+1} h_j(a)\right)
    \ge
    \begin{cases}
      1 + (n-1)  & a \equiv b_i \mod pR \text{ with } 1\le i \le r-1,\\
      n          & a \equiv b_r \mod pR,\\
      n + 1      & a \equiv c_\ell \text{ or } a \equiv z_k \text{ with  \(\ell=1\), 2, \(r+3 \le k \le p\)},
    \end{cases}
  \end{equation*}
  and hence, in combination with Equation~\eqref{eq:argue-equality},
  \begin{equation*}
    \val\!\left(\fixdiv\!\left(\prod_{j=1}^{r+1}h_j\right)\right) = n.
  \end{equation*}

  We apply Lemma~\ref{lemma:irreducible-replacements} to find monic
  polynomials $g_j\in \Z[x]$ for $1\le j \le r+1$ which are
  irreducible in $\Q[x]$ such that for all $1\le j \le r+1$, we have
  \begin{equation}\label{eq:g-h-valuations-Z}
    g_j \equiv h_j \mod p^{n+2}\Z[x].
  \end{equation}
  Note that all $g_j$ are irreducible in $\Q[x]$ by
  Lemma~\ref{lemma:irreducible-replacements}, and hence all $g_i$ are
  irreducible in $\Z_{(p)}[x] \subseteq \Qx$.  Moreover,
  from~\eqref{eq:g-h-valuations-Z} we conclude that
  \begin{equation*}
    g_j \equiv h_j \mod p^{n+2}\Z_{(p)}[x]
  \end{equation*}
  holds, which, in turn, immediately implies
  \begin{equation}\label{eq:g-h-valuations}
    g_j(a) \equiv h_j(a) \mod p^{n+2}\Z_{(p)}
  \end{equation}
  for all $a\in \Z_{(p)} = R$ and all $1\le j\le r+1$.

  We set $f = \prod_{j=1}^{r+1}g_j$. By
  Remark~\ref{remark:equal-rows}, in combination with
  Congruence~\eqref{eq:g-h-valuations}, it follows that
  $\val(g_j(w_i)) = \val(h_j(w_i))$ and
  $\val(g_j(a_\ell)) = \val(h_j(a_\ell))$ for all $1\le j \le r+1$,
  $1\le i \le r$, and $\ell=1$, $2$. This implies that $g_1$, \ldots,
  $g_{r+1}$ satisfy the hypotheses of
  Proposition~\ref{theorem:realization-prep}. The assertion follows.
\end{proof}


\bibliographystyle{amsplainurl}
\bibliography{biblio}

\end{document}